\newcommand{\IN}{\mathbb N}
\newcommand{\IR}{\mathbb R}
\newcommand{\w}{\omega}
\newcommand{\IQ}{\mathbb Q}
\newcommand{\IZ}{\mathbb Z}
\newcommand{\defeq}{\coloneqq}
\newcommand{\updownarrows}{\;{\uparrow}\!{\downarrow}\;}
\newcommand{\dist}{\mathsf{d}}
\newcommand{\Sf}{{\mathsf S}}
\newcommand{\dom}{\mathsf{dom}}
\newcommand{\rng}{\mathsf{rng}}
\newtheorem{theorem}{Theorem}[section]
\newtheorem{corollary}[theorem]{Corollary}
\newtheorem{example}[theorem]{Example}
\newtheorem{lemma}[theorem]{Lemma}
\newtheorem{claim}[theorem]{Claim}
\newtheorem{proposition}[theorem]{Proposition}
\newtheorem{problem}[theorem]{Problem}
\theoremstyle{definition}
\newtheorem{definition}[theorem]{Definition}
\newtheorem{remark}[theorem]{Remark}
\title{Metric characterizations of some subsets of the real line}
\author{Iryna Banakh, Taras Banakh, Maria Kolinko, Alex Ravsky}
\address{I.~Banakh, A.~Ravsky: Institute for Applied Problems of Mechanics and Mathematics of National Academy of Sciences of Ukraine,  Naukova 3b, Lviv, Ukraine}
\email{ibanakh@yahoo.com, alexander.ravsky@uni-wuerzburg.de}
\address{T.~Banakh, M.~Kolinko: Ivan Franko National University of Lviv, Ukraine}
\email{t.o.banakh@gmail.com, marybus20@gmail.com}
\subjclass[2010]{51F99, 54E35, 54E40}
\keywords{Metric space, Triangle Equality, subline, ray}
\begin{document}
\begin{abstract} A metric space $(X,\dist)$ is called a {\em subline} if every 3-element subset $T$ of $X$ can be written as $T=\{x,y,z\}$ for some points $x,y,z$ such that $\dist(x,z)=\dist(x,y)+\dist(y,z)$. By a classical result of Menger, every subline of cardinality $\ne 4$ is isometric to a subspace of the real line. A subline $(X,\dist)$ is called a {\em $n$-subline} for a natural number $n$ if for every $c\in X$ and positive real number $r\in\dist[X^2]$, the sphere $\Sf(c;r)\defeq\{x\in X:\dist(x,c)=r\}$ contains at least $n$ points. We prove that every $2$-subline is isometric to some additive subgroup of the real line. Moreover, for every subgroup $G\subseteq\IR$, a metric space $(X,\dist)$ is isometric to $G$ if and only if $X$ is a $2$-subline with $\dist[X^2]=G_+\defeq G\cap[0,\infty)$. A metric space $(X,\dist)$ is called a {\em ray} if $X$  is a $1$-subline and $X$ contains a point $o\in X$ such that for every $r\in\dist[X^2]$ the sphere $\Sf(o;r)$ is a singleton. We prove that for a subgroup $G\subseteq\IQ$, a metric space $(X,\dist)$ is isometric to the ray $G_+$ if and only if $X$ is a ray with $\dist[X^2]=G_+$. A metric space $X$ is isometric to the ray $\IR_+$ if and only if $X$ is a complete ray such that $\IQ_+\subseteq \dist[X^2]$. On the other hand, the real line contains a dense ray $X\subseteq\IR$ such that $\dist[X^2]=\IR_+$.
\end{abstract}

\maketitle

\section{Introduction and Main Results}

In this paper we discuss characterizations of metric spaces which are isometric to some important subspaces of the real line, in particular, to the spaces $\IN,\IZ,\IQ,\IR$ of natural, integer, rational, real numbers, respectively. The space of real numbers $\IR$ and its subspaces $\IN,\IZ,\IQ$ are endowed with the standard Euclidean metric $\dist(x,y)=|x-y|$. For a subset $X\subseteq\IR$, let $X_+\defeq \{x\in X:x\ge 0\}$.

The sets $\IZ$ and $\IQ$ are subgroups of the real line, and $\IZ_+,\IQ_+,\IR_+$ are submonoids of $\IR$.
%The subsets $\IZ$ and $\IQ$ are subgroups of the {\red group $(\IR,+)$}, and $\IZ_+,\IQ_+,\IR_+$ are submonoids of {\red $(\IR,+)$}.

A set $X\subseteq \IR$ is called
\begin{itemize}
\item a {\em submonoid} of $\IR$ if $0\in X$ and $x+y\in X$ for all $x,y\in X$;
\item a {\em subgroup} of $\IR$ if $X$ is a submonoid of $\IR$ such that $-x\in X$ for every $x\in X$.
\end{itemize}

For a metric space $X$, we denote by $\dist_X$ (or just by $\dist$ if $X$ is clear from the context) the metric of the space $X$. For two points $x,y$ of a metric space $X$, the real number $\dist_X(x,y)$ will be denoted by $xy$.

Two metric spaces $X$ and $Y$ are {\em isometric} if there exists a bijective function $f:X\to Y$ such that $\dist_Y(f(x),f(y))=\dist_X(x,y)$ for all $x,y\in X$. A metric space $X$ is defined to {\em embed\/} into a metric space $Y$ if $X$ is isometric to some subspace of $Y$.

Observe that the  space $\IN\defeq\IZ_+\setminus\{0\}$ is isometric to $\IZ_+$.

\begin{definition} A metric space $X$ is called a {\em subline} if any 3-element subset $T\subseteq X$ embeds into the real line. This happens if and only if any points $x,y,z\in X$ satisfy the following property called the {\em Triangle Equality}:
$yz=yx+xz\;\vee\;xz=xy+yz\;\vee\;xy=xz+zy.$
%$$x,y)=x,z)+z,y)\;\vee\;x,y)=x,z)+z,y)\;\vee\;x,y)=x,z)+z,y).$$
\end{definition}

According to an old result of Menger \cite{Menger} (see also \cite{Bowers}), a subline $X$ embeds into a real line if and only if it is not an $\ell_1$-rectangle.

\begin{definition}\label{d:rectangle} A metric space $(X,\dist)$ is called an {\em  $\ell_1$-rectangle} if $X=\{a,b,c,d\}$ for some pairwise distinct points $a,b,c,d$ such that $ab=cd$, $bc=ad$ and $ac=ab+bc=bd$.
%An $\ell_1$-rectangle $\{a,b,c,d\}$ is called an {\em $\ell_1$-square} if $ab=bc$.
\end{definition}

\begin{example} Let $\IR^2$ be the real plane endowed with the $\ell_1$-metric
$$\dist:\IR^2\times\IR^2\to\IR,\quad\dist:\big((x,y),(u,v)\big)\mapsto|x-u|+|y-v|.$$
 For any positive real numbers $a,b$, the subset $$\square_a^b\defeq \{(a,b),(a,-b),(-a,b),(-a,-b)\}$$ of  $\IR^2$ is an $\ell_1$-rectangle. Moreover, every $\ell_1$-rectangle is isometric to the $\ell_1$-rectangle $\square_a^b$ for unique positive real numbers $a\le b$.
\end{example}

 %Definition~\ref{d:rectangle} implies that every $\ell_1$-rectangle is a $1$-subline (but fails to be a $2$-subline).

 The following metric characterization of subspaces of the real line was surely known to Karl Menger \cite{Menger} and was also mentioned (without  proof) in \cite{Bowers}.

\begin{theorem}\label{t:main1} A metric space $X$ embeds into the real line if and only if $X$ is a subline and $X$ is not an $\ell_1$-rectangle.
\end{theorem}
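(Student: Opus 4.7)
The \emph{necessity} direction is routine: if $X$ embeds isometrically into $\IR$, then so does every three-element subset, so triangle equality holds and $X$ is a subline; and $X$ cannot be an $\ell_1$-rectangle $\{a,b,c,d\}$ because in $\IR$ the conditions $ac=ab+bc=ad+dc$ together with $ab=cd$ and $ad=bc$ would force $b=d$, contradicting distinctness.

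For \emph{sufficiency} the cases $|X|\le 3$ are immediate from the subline property, so I assume $|X|\ge 2$ and fix two distinct points $o,e\in X$. The plan is to define a candidate embedding $f\colon X\to\IR$ by
\[
f(x)\defeq\begin{cases} ox & \text{if } oe=ox+xe \text{ or } ox=oe+ex,\\ -ox & \text{if } ex=eo+ox,\end{cases}
\]
the three cases being the outcomes of triangle equality on $\{o,e,x\}$; when several apply simultaneously they all yield $f(x)=0$, which can only happen for $x=o$. Note that $f(o)=0$ and $f(e)=oe$.

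To verify $|f(x)-f(y)|=xy$ for arbitrary $x,y\in X$, I would run a case analysis indexed by which branch of the definition applies to $x$ and to $y$, then invoke the triangle equalities on the triples $\{o,x,y\}$ and $\{e,x,y\}$ and rearrange. In all but a short list of problematic subcases the identity follows by direct arithmetic, and injectivity of $f$ falls out as a byproduct. The problematic subcases are precisely those in which the six pairwise distances among $o,e,x,y$ are consistent with $\{o,e,x,y\}$ forming an $\ell_1$-rectangle in the sense of Definition~\ref{d:rectangle}.

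The main obstacle is to conclude, in each such problematic subcase, that $X=\{o,e,x,y\}$, so that the hypothesis on $X$ forbids this configuration outright. I would extract this as a separate lemma: a subline containing an $\ell_1$-rectangle as a $4$-point subspace must coincide with that rectangle. To prove the lemma, one checks that for any hypothetical fifth point $z\in X$, the highly constrained distances $oe,ox,oy,ex,ey,xy$ of a rectangle, combined with the six triangle equalities on the $3$-subsets of $\{o,e,x,y,z\}$, leave no consistent choice of $oz,ez,xz,yz$; in particular, after fixing the placement of $z$ via the triangle equality on $\{o,e,z\}$, the triangle equality on $\{e,y,z\}$ will be seen to fail. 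Granted this lemma, the bad subcases never arise and $f$ is an isometry onto its image, completing the proof.
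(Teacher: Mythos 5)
Your proposal is sound---every claim you defer does in fact check out---but it is organized genuinely differently from the paper's proof, and the comparison is instructive. The paper first proves the \emph{finite} case (Lemma~\ref{l:Ravsky}) by choosing base points $a,b$ at \emph{maximal} distance $D$: maximality forces $ab=ax+xb$ for every $x$, so the coordinate $f(x)=ax$ needs no sign and essentially no case analysis; failure of $f$ then produces an $\ell_1$-rectangle whose diagonal equals $D$, and maximality again pins down the Triangle Equalities involving a fifth point $z$ (an equality such as $xz=xy+yz$ with $yz>0$ would exceed $D$), so the contradiction is only a few lines. Infinite spaces are handled afterwards by a patching argument: for each finite $F$ containing $a,b$, the normalized embedding $f_F$ with $f_F(a)=0$, $f_F(b)=ab$ is pointwise unique, so all the $f_F$ glue into a global isometry. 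You avoid this finite/infinite dichotomy altogether by taking \emph{arbitrary} base points $o,e$ and the signed coordinate $f(x)=\pm ox$; the price is a $3\times 3$ case analysis over branches (your claim that the bad configurations are exactly the rectangle ones is correct, and note that they occur both when $o,e$ sit on a diagonal and when they sit on a side of the rectangle), plus a rectangle-rigidity lemma in full generality: a subline properly containing an $\ell_1$-rectangle does not exist. That lemma is true, but its fifth-point contradiction is a substantially longer case analysis than your one-sentence hint suggests, because without the maximality normalization the Triangle Equality branches for $\{o,e,z\}$, $\{x,y,z\}$, $\{o,x,z\}$ and so on are not pinned down in advance and must be combined with triangle inequalities; it does close, in every branch. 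In short, your route buys uniformity over all cardinalities at the cost of heavier bookkeeping, while the paper's route buys short arithmetic in both halves at the cost of a two-stage structure. One small slip to repair: your well-definedness claim (``when several branches apply they all yield $f(x)=0$'') is false as stated, since for $x=e$ the first two branches both apply and both give $f(e)=oe\neq 0$; the correct observation is that the branches yielding $+ox$ can conflict with the branch yielding $-ox$ only when $ox=0$, so $f$ is well defined anyway.
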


Theorem~\ref{t:main1} has the following corollary.

\begin{corollary}\label{c:1} A metric space $X$ of cardinality $|X|\ne 4$ embeds into the real line if and only if it is a subline.
\end{corollary}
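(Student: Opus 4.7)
The plan is to derive Corollary~\ref{c:1} as an almost immediate consequence of Theorem~\ref{t:main1}, using only the observation that, by Definition~\ref{d:rectangle}, every $\ell_1$-rectangle has exactly four points.

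For the forward implication, I would simply quote Theorem~\ref{t:main1}: if $X$ embeds into $\IR$, then $X$ is a subline. This direction requires no hypothesis on cardinality and no further argument.

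For the backward implication, assume $X$ is a subline with $|X|\ne 4$. By Definition~\ref{d:rectangle}, an $\ell_1$-rectangle is a four-element set, so the cardinality condition $|X|\ne 4$ forces $X$ not to be an $\ell_1$-rectangle. Applying Theorem~\ref{t:main1} now yields that $X$ embeds into the real line.

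There is essentially no obstacle here; the content of the corollary is just the elementary cardinality observation that $\ell_1$-rectangles are the four-point obstruction. The only thing to be careful about is to cite the definition of $\ell_1$-rectangle explicitly, so the reader sees that the exclusion $|X|\ne 4$ automatically rules out the exceptional case appearing in Theorem~\ref{t:main1}.
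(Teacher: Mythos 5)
Your proposal is correct and is exactly the argument the paper intends: the corollary is stated as an immediate consequence of Theorem~\ref{t:main1}, with the cardinality hypothesis $|X|\ne 4$ serving only to rule out the $\ell_1$-rectangle (a four-point space by Definition~\ref{d:rectangle}). Nothing further is needed.
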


Corollary~\ref{c:1} is a partial case of the following characterization that was proved by Karl Menger \cite{Menger} in general terms of congruence relations and reproved  by John Bowers and Philip Bowers \cite{Bowers} for metric spaces.

\begin{theorem}\label{t:Rn} For every natural number $n$, a metric space $X$ of cardinality $|X|\ne n+3$ embeds into the Euclidean space $\IR^n$ if and only if every subspace $A\subseteq X$ of cardinality $|A|\le n+2$ embeds into $\IR^n$.
\end{theorem}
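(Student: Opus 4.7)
The implication ``$\Rightarrow$'' is trivial, so suppose every subspace of $X$ of cardinality at most $n+2$ embeds isometrically into $\IR^n$, with $|X|\ne n+3$. If $|X|\le n+2$ one applies the hypothesis directly; hence assume $|X|\ge n+4$. Following Menger's scheme, I would choose a subset $A=\{a_0,\dots,a_k\}\subseteq X$ of maximal cardinality admitting an isometric embedding $\varphi_0\colon A\to\IR^n$ with $\varphi_0(A)$ affinely independent. Then $k\le n$ and $V\defeq\mathrm{aff}(\varphi_0(A))$ is a $k$-dimensional affine subspace of $\IR^n$.

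For each $x\in X\setminus A$, the set $A\cup\{x\}$ has $k+2\le n+2$ elements and so embeds in $\IR^n$ by hypothesis; maximality of $A$ forces the image of $x$ to lie in the affine span of the image of $A$, which after normalising means $\varphi_0$ extends uniquely to a point $\varphi(x)\in V$ (since $\varphi_0(A)$ is a $k$-simplex spanning $V$, distances to the $a_i$ locate every point of $V$ uniquely, by the Cayley--Menger criterion). This yields a function $\varphi\colon X\to V\subseteq\IR^n$ with $\varphi|_A=\varphi_0$.

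It remains to verify that $\varphi$ is globally an isometry, i.e.\ $d(\varphi(x),\varphi(y))=xy$ for all $x,y\in X\setminus A$. If $k<n$, then $A\cup\{x,y\}$ has $k+3\le n+2$ elements, hence embeds in $\IR^n$ via some $\psi$; by maximality of $A$ the images $\psi(x),\psi(y)$ both lie in $\mathrm{aff}(\psi(A))$, so after composing with an isometry of $\IR^n$ that aligns $\psi|_A$ with $\varphi_0$, the uniqueness from the previous paragraph gives $\psi(x)=\varphi(x)$ and $\psi(y)=\varphi(y)$, whence $d(\varphi(x),\varphi(y))=d(\psi(x),\psi(y))=xy$.

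The main obstacle is the case $k=n$, for which $A\cup\{x,y\}$ has exactly $n+3$ elements --- precisely the cardinality excluded from the hypothesis. Here $V=\IR^n$ and the auxiliary $(n+2)$-point sets $(A\setminus\{a\})\cup\{x,y\}$, $a\in A$, do embed by hypothesis, but now $\psi(x)$ and $\psi(y)$ are each determined only up to reflection across the hyperplane spanned by $\varphi_0(A\setminus\{a\})$. A case analysis over the two sides of these hyperplanes --- using a third auxiliary point $z\in X\setminus(A\cup\{x,y\})$ available because $|X|\ge n+4$, together with the unique embedding of $A\cup\{z\}$ --- rules out the mismatched orientations and forces the desired equality $d(\varphi(x),\varphi(y))=xy$. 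That the condition $|X|\ne n+3$ is genuinely needed is witnessed, for $n=1$, by the $\ell_1$-rectangle $\square_a^b$ of Definition~\ref{d:rectangle}, which satisfies the hypothesis yet fails to embed in $\IR$; analogous $(n+3)$-point cross-polytope--type configurations play the same obstructing role for every $n$.
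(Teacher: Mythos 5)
Your skeleton is the right one (it is essentially Menger's scheme, and also the scheme of the unfinished draft section at the end of the source): reduce to $|X|\ge n+4$, fix a maximal set $A$ whose image under some embedding $\varphi_0$ is affinely independent, note $k\le n$, define $\varphi(x)$ for each $x\in X\setminus A$ by embedding the $(k+2)$-point set $A\cup\{x\}$ and using that a point of $V=\mathrm{aff}(\varphi_0(A))$ is determined by its distances to the $k+1$ points $\varphi_0(a_i)$, and dispose of the case $k<n$ by embedding $A\cup\{x,y\}$. All of this is correct, but it is the routine part. The genuine content of the theorem is the case $k=n$, and there your proposal stops being a proof: after correctly observing that the embeddings of $(A\setminus\{a\})\cup\{x,y\}$ pin down $\psi(x)$ and $\psi(y)$ only up to reflection in the hyperplane $\mathrm{aff}(\varphi_0(A\setminus\{a\}))$, you simply assert that ``a case analysis \dots rules out the mismatched orientations.'' That assertion is the theorem, not a step of its proof: it is exactly where the excluded $(n+3)$-point configurations live and where $|X|\ge n+4$ must enter. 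Moreover the analysis does not reduce to ``the unique embedding of $A\cup\{z\}$'' as you suggest, because the auxiliary pairs $(x,z)$ and $(y,z)$ are subject to the very same reflection ambiguity; one has to run the dichotomy simultaneously for all three pairs, and degenerate subcases (reflected images coinciding, several points receiving the same image, etc.) must be eliminated by separate arguments, ultimately falling back on the embeddability of small subsets such as $\{x,y,z\}$. None of that is carried out, or even set up, in your sketch.

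For calibration: the paper itself does not prove Theorem~\ref{t:Rn} at all -- it quotes it from Menger and from Bowers--Bowers -- and proves only the case $n=1$ (Theorem~\ref{t:main1}). Even in that one-dimensional case, essentially the whole of Lemma~\ref{l:Ravsky} is devoted to precisely the step you skip: once a bad pair $x,y$ forces $\{x,a,y,b\}$ to be an $\ell_1$-rectangle, a fifth point $z$ is introduced and a non-obvious chain of equalities and strict inequalities, culminating in three positive quantities that jointly violate the Triangle Equality for $\{a,x,z\}$, yields the contradiction. (The draft ``Theorem t:Tn'' section in the source avoids the issue differently, by assuming that all subsets of cardinality $\le n+3$ embed, which removes the reflection ambiguity for pairs -- a strictly stronger hypothesis than the one in Theorem~\ref{t:Rn}.) So what you have is a correct reduction of the theorem to its hard kernel, together with a correct identification of that kernel, but not a proof of it.
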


The paper \cite{Bowers} contains a decription of metric spaces of cardinality $n+3$ that do not embed into $\IR^n$ but whose all proper subsets do embed into $\IR^n$. For $n=1$ such metric spaces are exactly $\ell_1$-rectangles.

Theorem~\ref{t:main1} will be applied in the metric characterizations of the spaces $\IZ,\IQ,\IR,\IZ_+,\IQ_+,\IR_+$. Those characterizations involve the following definition.

\begin{definition}  Let $\kappa$ be a cardinal number. A metric space $X$ is called
\begin{itemize}
\item {\em $\kappa$-spherical} if for every $r\in\dist[X^2]\setminus\{0\}$ and $c\in X$ the sphere $\Sf(c;r)\defeq\{x\in X:xc=r\}$ contains at least $\kappa$ points;
\item a {\em $\kappa$-subline} if $X$ is a $\kappa$-spherical subline.
\end{itemize}
\end{definition}

For $\kappa>2$, the definition of a $\kappa$-subline is vacuous: indeed, assuming that some sphere $\Sf(c;r)$ in a subline contains three pairwise distinct points $x,y,z$,  we can apply the Triangle Equality and conclude that $xy=xc+cy=2r=xz=yz$, witnessing that the Triangle Equality fails for the points $x,y,z$.

Therefore, for every metric space we have the implications
$$\mbox{$2$-subline}\Rightarrow\mbox{$1$-subline}\Rightarrow\mbox{$0$-subline}\Leftrightarrow\mbox{subline}.$$

\begin{theorem}\label{t:main2} Every nonempty $2$-subline is isometric to a  subgroup of the real line. Moreover, a metric space $X$ is isometric to a subgroup $G$ of $\IR$ if and only if $X$ is a $2$-subline such that $\dist[X^2]=G_+$.
\end{theorem}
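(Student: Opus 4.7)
The ``only if'' direction of the characterization is immediate: any subgroup $G\subseteq\IR$ is a subline (as a subset of $\IR$); since $G$ is closed under subtraction we have $\dist[G^2]=G_+$; and for $c\in G$ and $r\in G_+\setminus\{0\}$ the real-line sphere $\Sf(c;r)=\{c-r,c+r\}$ is contained in $G$ and has cardinality $2$, so $G$ is $2$-spherical.

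For the converse, let $X$ be a nonempty $2$-subline; the case $|X|=1$ is trivial with $G=\{0\}$, so assume $|X|\ge 2$. First I would rule out that $X$ is an $\ell_1$-rectangle: in the rectangle of Definition~\ref{d:rectangle} the sphere $\Sf(a;ac)$ equals the singleton $\{c\}$, contradicting $2$-sphericity. By Theorem~\ref{t:main1} the space $X$ then embeds isometrically into $\IR$, so I may identify $X$ with a subset of $\IR$. Inside $\IR$ the sphere $\Sf(c;r)$ has at most the two points $c\pm r$, so $2$-sphericity upgrades to the following reflection rule: for every $c\in X$ and every $r\in\dist[X^2]\setminus\{0\}$, both $c-r$ and $c+r$ lie in $X$. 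Fix a basepoint $x_0\in X$ and set $G\defeq X-x_0\subseteq\IR$; the translation $x\mapsto x-x_0$ is the candidate isometry onto a subgroup. Since $0\in G$, the subgroup property reduces to closure under subtraction: given $x,y\in X$ with $x\ne y$, the number $r\defeq|x-y|$ lies in $\dist[X^2]\setminus\{0\}$, so the reflection rule at $x_0$ forces $x_0+(x-y)\in\{x_0-r,x_0+r\}\subseteq X$, whence $(x-x_0)-(y-x_0)=x-y\in G$.

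For the ``moreover'' part, one direction is obvious: if $X$ is isometric to $G$ then $\dist[X^2]=\dist[G^2]=G_+$. For the opposite direction, suppose $X$ is a $2$-subline with $\dist[X^2]=G_+$ for a prescribed subgroup $G\subseteq\IR$. The subgroup $H\defeq X-x_0$ produced above is isometric to $X$, hence $H_+=\dist[H^2]=\dist[X^2]=G_+$; since any subgroup of $\IR$ is recovered from its positive part via $H=H_+\cup(-H_+)$, we conclude $H=G$, so $X$ is isometric to $G$. The only mildly nontrivial step in the whole argument is excluding $\ell_1$-rectangles so that Theorem~\ref{t:main1} can be invoked; once $X$ sits inside $\IR$ the $2$-sphericity condition collapses into the reflection rule and the group structure follows by a single translation.
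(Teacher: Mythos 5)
Your proof is correct and takes essentially the same route as the paper: rule out the $\ell_1$-rectangle so that Theorem~\ref{t:main1} places $X$ inside $\IR$, read $2$-sphericity there as the reflection rule $\{c-r,c+r\}\subseteq X$ to obtain a subgroup after translating a basepoint to $0$, and then recover $G$ from $G_+=H_+$ for the ``moreover'' part. The only cosmetic differences are that the paper excludes the rectangle by showing a nontrivial $2$-subline cannot be finite (a maximal distance would give a singleton sphere), and it applies sphericity at every point rather than only at the basepoint.
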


A metric space $X$ is called {\em Banakh} if for every $c\in X$ and $r\in\dist[X^2]$, there exist points $x,y\in X$ such that $\Sf(x;r)=\{x,y\}$ and $\dist(x,y)=2r$. It is easy to see that every 2-subline is a Banakh space. Theorem~\ref{t:main2} can be compared with the following metric characterizations of subgroups of $\IQ$, proved in \cite{Ban}.

\begin{theorem}\label{t:Ban} A metric space $X$ is isometric to a subgroup $G$ of the group $\IQ$ if and only if $X$ is a Banakh space with $\dist[X^2]=G_+$.
\end{theorem}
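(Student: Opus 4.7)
The forward implication is a routine verification: for every subgroup $G\subseteq\IQ$ and every $c\in G$, $r\in G_+\setminus\{0\}$, the sphere $\Sf(c;r)=\{c-r,c+r\}$ has exactly two antipodal points at mutual distance $2r$, and closure of $G$ under subtraction gives $\dist[G^2]=G_+$.

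For the converse, I would fix a basepoint $o\in X$ and build an isometry $\phi:X\to G$ sending $o$ to $0$. The Banakh property at $o$ supplies, for each $r\in G_+\setminus\{0\}$, exactly two points $o_r^+,o_r^-\in\Sf(o;r)$ with $\dist(o_r^+,o_r^-)=2r$; the candidate map is $\phi(o_r^\pm)\defeq\pm r$, once a consistent choice of signs across radii has been made.

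The crux of the argument is a Triangle Equality at $o$: for every $x,y\in X\setminus\{o\}$,
\[\dist(x,y)\in\{\,|\dist(o,x)-\dist(o,y)|,\;\dist(o,x)+\dist(o,y)\,\}.\]
My plan is to exploit rationality by reducing to a cyclic subgroup. Letting $r\defeq\dist(o,x)$ and $s\defeq\dist(o,y)$ with a common denominator $N$, I would iteratively apply the Banakh antipode construction — starting with the two-point sphere $\Sf(o;1/N)=\{p_1,p_{-1}\}$ and then successively taking the antipode of $p_{k-1}$ in $\Sf(p_k;1/N)$ — to build a bi-infinite skeleton $\{p_k\}_{k\in\IZ}$ with $p_0=o$. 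By induction on $|j-k|$ together with the uniqueness of antipodes in two-point spheres, one obtains $\dist(p_j,p_k)=|j-k|/N$, so $\{p_k\}$ is isometric to $\tfrac{1}{N}\IZ$. Since $\Sf(o;r)$ contains only two points, $x\in\{p_{Nr},p_{-Nr}\}$ and similarly $y\in\{p_{Ns},p_{-Ns}\}$, forcing $\dist(x,y)\in\{|r-s|,r+s\}$. The relation ``$x\sim y$ iff $\dist(x,y)=|\dist(o,x)-\dist(o,y)|$'' then becomes an equivalence on $X\setminus\{o\}$ with exactly two classes swapped by the antipode map, and fixing one class as positive defines the signs of $\phi$; injectivity follows from the two-point sphere condition, distance preservation from the dichotomy, and surjectivity onto $G=G_+\cup(-G_+)$ from $G_+=\dist[X^2]$.

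The principal obstacle is the Triangle Equality step: the Banakh condition is strictly weaker than the 2-subline property, and rationality is exactly what allows a single countable skeleton to pin down every pairwise comparison. This is also why the hypothesis $G\subseteq\IQ$ cannot be dropped — for an irrational subgroup $G\subseteq\IR$ one must strengthen ``Banakh'' to ``2-subline'' as in Theorem~\ref{t:main2}.
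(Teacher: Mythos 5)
First, a point of order: this paper does not prove Theorem~\ref{t:Ban} at all; the theorem is quoted from the preprint \cite{Ban}, so there is no internal proof to compare yours against. Judged on its own merits, your overall strategy (reduce two rational distances to a common cyclic subgroup, build a $\IZ$-skeleton of iterated antipodes, deduce the sum/difference dichotomy, then assign signs) is the right kind of argument and resembles the segment-addition machinery of \cite{Ban}. But before the main issue, one slip: if $r,s\in G_+$ have common denominator $N$, the number $1/N$ need not lie in $G_+=\dist[X^2]$ (take $G=\tfrac{2}{3}\IZ$ and $r=s=\tfrac{2}{3}$), so the Banakh property says nothing about $\Sf(o;1/N)$; you must instead take the positive generator $t$ of the cyclic group generated by $r$ and $s$, which does lie in $G_+$.

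The genuine gap is the claim that $\dist(p_j,p_k)=|j-k|\,t$ follows ``by induction on $|j-k|$ together with the uniqueness of antipodes in two-point spheres.'' It does not: uniqueness of antipodes on spheres \emph{centered at the chain points} is fully consistent with the chain folding back onto an $\ell_1$-square. Concretely, let $p_{-1},p_0,p_1$ satisfy $\dist(p_{\pm1},p_0)=t$, $\dist(p_{-1},p_1)=2t$, and let $p_2$ be the antipode of $p_0$ in $\Sf(p_1;t)$. The configuration with $\dist(p_2,p_{-1})=t$, i.e.\ $\{p_{-1},p_0,p_1,p_2\}$ isometric to the $\ell_1$-square $\square_{t/2}^{t/2}$, satisfies every sphere condition centered at these four points (for instance $\Sf(p_{-1};t)=\{p_0,p_2\}$ with $\dist(p_0,p_2)=2t$, exactly as the Banakh property demands); moreover the $\ell_1$-square is itself a subline, so no bookkeeping of triangle equalities and antipodes among the chain points can ever exclude it. To exclude it you must leave the chain: the Banakh property gives a point $u$ with $\Sf(p_{-1};2t)=\{p_1,u\}$ and $\dist(u,p_1)=4t$; the triangle inequality then forces $\dist(u,p_0)=3t=\dist(u,p_2)$, so $p_0$ and $p_2$ are two distinct points of the sphere $\Sf(u;3t)$, and antipodality on \emph{that} sphere yields $\dist(p_0,p_2)=6t$, contradicting $\dist(p_0,p_2)=2t$. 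Auxiliary antipodes of this kind (and parity arguments for reflecting through intermediate chain points) are the actual content of the theorem, and your induction, which never leaves the chain, cannot close without them. The same square configuration also defeats your final step independently: on the square the dichotomy at $o$ holds, yet the relation $x\sim y\iff\dist(x,y)=|\dist(o,x)-\dist(o,y)|$ is not transitive there, so its transitivity on $X$ is again exactly the statement that squares cannot occur, not a consequence of the dichotomy.
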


Theorems~\ref{t:main2} and \ref{t:Ban} imply the following characterizations of the  metric spaces $\IZ,\IQ,\IR$.

\begin{corollary} A metric space $X$ is isometric to $\IZ$ if and only if $X$ is a $2$-subline with $\dist[X^2]=\IZ_+$ if and only if $X$ is a Banakh space with $\dist[X^2]=\IZ_+$.
\end{corollary}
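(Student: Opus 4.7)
The statement follows directly from the two metric characterizations that immediately precede it, specialized to $G=\IZ$.

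The plan is simply to observe that $\IZ$ is a subgroup of $\IR$ and also a subgroup of $\IQ$, and that the distance set of $\IZ$ (with its standard metric) equals $\IZ_+$, since for any $m,n\in\IZ$ one has $|m-n|\in\IZ_+$, and conversely every $k\in\IZ_+$ equals $|k-0|$. Hence the hypothesis "$\dist[X^2]=\IZ_+$" matches in both preceding theorems.

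For the first equivalence I would apply Theorem~\ref{t:main2} with $G=\IZ$: a metric space $X$ is isometric to the subgroup $\IZ\subseteq\IR$ if and only if $X$ is a $2$-subline with $\dist[X^2]=\IZ_+=G_+$. For the second equivalence I would apply Theorem~\ref{t:Ban} with $G=\IZ\subseteq\IQ$: $X$ is isometric to the subgroup $\IZ$ of $\IQ$ if and only if $X$ is a Banakh space with $\dist[X^2]=\IZ_+$. Chaining the two biconditionals gives the corollary.

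There is no real obstacle here; the only point worth mentioning explicitly is the compatibility of the two characterizations, which is guaranteed by the remark in the paper that every $2$-subline is a Banakh space. Thus the chain of equivalences is consistent in the direction "$2$-subline $\Rightarrow$ Banakh," and the other direction follows through the common conclusion of being isometric to $\IZ$. The write-up can therefore be a two- or three-line citation of Theorems~\ref{t:main2} and \ref{t:Ban}, with no additional computation required.
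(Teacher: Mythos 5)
Your proposal is correct and is exactly the paper's (implicit) argument: the paper derives this corollary by specializing Theorem~\ref{t:main2} and Theorem~\ref{t:Ban} to $G=\IZ$, just as you do, with both equivalences chained through ``$X$ is isometric to $\IZ$.'' The only cosmetic remark is that your appeal to ``every $2$-subline is a Banakh space'' is unnecessary, since the two biconditionals already share the common statement (a) and thus compose purely logically.
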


\begin{corollary} A metric space $X$ is isometric to $\IQ$ if and only if $X$ is a $2$-subline with $\dist[X^2]=\IQ_+$ if and only if $X$ is a Banakh space with $\dist[X^2]=\IQ_+$.
\end{corollary}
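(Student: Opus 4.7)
The plan is to derive this three-way equivalence as a direct instantiation of Theorems~\ref{t:main2} and~\ref{t:Ban}, both applied with $G=\IQ$. The rationals form a subgroup of $\IR$ (so Theorem~\ref{t:main2} applies) and are trivially a subgroup of $\IQ$ itself (so Theorem~\ref{t:Ban} applies). Moreover, $\IQ_+=\IQ\cap[0,\infty)$ matches the notation used in both theorems, so no translation is needed.

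First, I would invoke Theorem~\ref{t:main2} with $G=\IQ$: a metric space $X$ is isometric to $\IQ$ if and only if $X$ is a $2$-subline with $\dist[X^2]=\IQ_+$. This provides the first equivalence of the corollary.

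Next, I would invoke Theorem~\ref{t:Ban} with $G=\IQ$: a metric space $X$ is isometric to $\IQ$ if and only if $X$ is a Banakh space with $\dist[X^2]=\IQ_+$. This provides the second equivalence. Combining these two biconditionals through their common middle term ``$X$ is isometric to $\IQ$'' yields the three-way equivalence as stated. As a consistency check, one may observe directly (as the text remarks) that every $2$-subline is a Banakh space, so the implication from the middle condition to the third is immediate without invoking Theorem~\ref{t:Ban}; only the converse from Banakh plus $\dist[X^2]=\IQ_+$ back to the $2$-subline condition (or to $\IQ$) requires the full strength of the rational-subgroup characterization.

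There is no real obstacle here: all the work is packaged inside the two quoted theorems, and the corollary is a straightforward specialization. The only thing to verify is that the hypotheses of both theorems genuinely hold for $G=\IQ$, which is immediate.
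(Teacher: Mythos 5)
Your proposal is correct and matches the paper's own route exactly: the paper derives this corollary by instantiating Theorem~\ref{t:main2} and Theorem~\ref{t:Ban} at $G=\IQ$ and chaining the two biconditionals through ``$X$ is isometric to $\IQ$''. Your added remark that the implication from $2$-subline to Banakh space is immediate is a nice consistency check but, as you note, not needed for the argument.
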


\begin{corollary}\label{c:R} A metric space $(X,\dist)$ is isometric to $\IR$ if and only if $X$ is a $2$-subline such that $\dist[X^2]=\IR_+$.
\end{corollary}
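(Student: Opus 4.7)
The plan is to observe that Corollary~\ref{c:R} is simply the special case $G=\IR$ of Theorem~\ref{t:main2}. First I would note that $\IR$ itself is a subgroup of $\IR$ in the sense of the definition given in the introduction: it contains $0$, is closed under addition, and is closed under negation. Hence Theorem~\ref{t:main2}, applied with $G=\IR$, specializes word-for-word to the statement that $X$ is isometric to $\IR$ if and only if $X$ is a $2$-subline with $\dist[X^2]=\IR_+$, which is precisely the claim of Corollary~\ref{c:R}.

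The nontrivial ``if'' direction is therefore delivered directly by Theorem~\ref{t:main2}, with no further work required. For completeness I would verify the easy ``only if'' direction by hand on $\IR$: any three real numbers $x\le y\le z$ satisfy $xz=xy+yz$, so $\IR$ is a subline; for every $c\in\IR$ and $r>0$ the sphere $\Sf(c;r)=\{c-r,c+r\}$ has exactly two points, so $\IR$ is $2$-spherical; and $\dist[\IR^2]=\IR_+$ is immediate. Since isometry preserves all three properties, the same holds for any metric space isometric to $\IR$. There is no real obstacle here: all of the substantive content has already been absorbed into Theorem~\ref{t:main2}.
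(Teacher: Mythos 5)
Your proof is correct and is exactly the paper's intended argument: Corollary~\ref{c:R} is stated in the paper as an immediate consequence of Theorem~\ref{t:main2}, obtained by specializing to the subgroup $G=\IR$. The additional hand-verification of the ``only if'' direction is harmless but not needed, since that direction is also part of Theorem~\ref{t:main2}.
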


Corollary~\ref{c:R} can be compared with the following metric characterization of the real line, proved by Will Brian \cite{WB} (see also \cite[1.7]{Ban}).

\begin{theorem} A metric space $X$ is isometric to the real line if and only if $X$ is a complete Banakh space with $\IQ_+\subseteq\dist[X^2]$.
\end{theorem}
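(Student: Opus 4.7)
The forward implication is immediate: the real line $\IR$ is complete, $\dist[\IR^2]=\IR_+\supseteq\IQ_+$, and every sphere $\Sf_\IR(c;r)$ with $r>0$ consists of the two antipodal points $c-r,c+r$ at mutual distance $2r$.

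For the converse, assume $X$ is a complete Banakh space with $\IQ_+\subseteq\dist[X^2]$. The plan is to construct an isometric embedding of $\IQ$ into $X$, extend it to $\IR$ by completeness, and then use the Banakh property once more to see that the extension exhausts $X$. Fix $o\in X$ and enumerate $\IQ=\{q_n:n\in\w\}$ with $q_0=0$. Recursively set $F(0)\defeq o$ and, at stage $n\ge 1$, choose $F(q_n)$ to be one of the two antipodal points of the sphere $\Sf_X(o;|q_n|)$ (which has cardinality exactly $2$ by the Banakh property, since $|q_n|\in\IQ_+\subseteq\dist[X^2]$), with the choice made so that $\dist(F(q_i),F(q_n))=|q_i-q_n|$ for every $i<n$. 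The existence of a correct choice at each stage is the key step, and is forced by the Banakh structure at the previously placed centers $F(q_i)$. The resulting map $F\colon\IQ\to X$ is an isometric embedding.

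Since $F$ is $1$-Lipschitz, $\IQ$ is dense in $\IR$, and $X$ is complete, $F$ extends uniquely to an isometric embedding $\bar F\colon\IR\to X$, whose image $\bar Y\defeq\bar F(\IR)$ is a closed subspace of $X$ isometric to $\IR$. To conclude $X=\bar Y$, fix any $p\in X$ and set $r\defeq\dist(p,o)$; the case $r=0$ is trivial, so assume $r>0$. Since $r\in\dist[X^2]$, the Banakh property gives $\Sf_X(o;r)=\{x_1,x_2\}$ with $|\Sf_X(o;r)|=2$. On the other hand, $\bar F(r)$ and $\bar F(-r)$ are two distinct points of $\bar Y\subseteq X$ at distance $r$ from $o=\bar F(0)$, so they too lie in $\Sf_X(o;r)$. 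By cardinality, $\Sf_X(o;r)=\{\bar F(r),\bar F(-r)\}\subseteq\bar Y$, hence $p\in\bar Y$ and $X=\bar Y\cong\IR$.

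The main obstacle is the inductive step in the construction of $F$: given that $F$ is an isometry on $\{q_0,\ldots,q_{n-1}\}$, one must exhibit an antipode in $\Sf_X(o;|q_n|)$ at the correct distance $|q_i-q_n|$ to each previously placed $F(q_i)$. Making this precise requires exploiting the Banakh spheres around $o$ and around each $F(q_i)$ simultaneously, forcing a rigid one-dimensional configuration of the finite set $\{F(q_0),\ldots,F(q_n)\}$ despite the subline property not being assumed a priori. This rigidity is essentially the content of the proof of Theorem~\ref{t:Ban} in \cite{Ban}, whose technique can be adapted to produce the required consistent choice at each stage.
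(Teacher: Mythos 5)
Your forward direction, the completion step, and the closing surjectivity argument are all correct; the last one is clean and worth keeping: since $\bar F(r)$ and $\bar F(-r)$ are two distinct points of $\Sf_X(o;r)$ and the Banakh property makes that sphere a doubleton, any $p\in X$ with $\dist(p,o)=r$ must equal one of them. But the heart of the theorem is exactly the step you leave open: the existence, at each stage of your recursion, of a point of $\Sf_X(o;|q_n|)$ at distance $|q_i-q_n|$ from \emph{every} previously placed $F(q_i)$. A Banakh space is not assumed to satisfy the Triangle Equality, so nothing in your write-up forces the finite configurations $\{F(q_0),\dots,F(q_{n-1})\}$ to sit in a rigid one-dimensional pattern; asserting that the choice is ``forced by the Banakh structure,'' or that ``the technique of \cite{Ban} can be adapted,'' is a statement of the theorem's difficulty, not a proof of it. (Already for three points --- placing $F(1/2)$ between $o$ and $F(1)$ --- one needs a nontrivial argument: one must rule out that both points of $\Sf_X(o;1/2)$ are ``antipodal'' to $F(1)$, which requires playing several spheres against each other and deriving a contradiction with the two-point sphere condition at a new radius such as $3/2$.)

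Note also that Theorem~\ref{t:Ban} cannot simply be invoked as a black box: it applies to a Banakh space whose \emph{entire} distance set equals $G_+$ for a subgroup $G\subseteq\IQ$, whereas your $X$ satisfies only $\IQ_+\subseteq\dist[X^2]$ and may have many irrational distances. The natural repair --- pass to $Y=\{x\in X:\dist(x,o)\in\IQ\}$ and apply Theorem~\ref{t:Ban} to $Y$ with $G=\IQ$ --- runs into the same obstruction: without the Triangle Equality you cannot conclude that two points of $Y$ are at rational distance from each other, nor that the two-point spheres of $X$ centred at points of $Y$ stay inside $Y$, so $Y$ is not known to be a Banakh space with $\dist[Y^2]=\IQ_+$. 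Supplying these facts is precisely the segment addition/subtraction rigidity theory for Banakh spaces developed in \cite{Ban} and in Brian's argument \cite{WB}. For comparison, the paper itself offers no proof of this statement either --- it is quoted from \cite{WB} (see also \cite[1.7]{Ban}) as a counterpoint to Corollary~\ref{c:R} --- so your proposal reduces the theorem to its genuinely hard core and then cites that core away; as a self-contained proof it has a real gap.
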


We recall that a metric space $X$ is {\em complete} if every Cauchy sequence in $X$ is convergent.
\smallskip

Metric characterizations of the spaces $\IZ_+,\IQ_+,\IR_+$ are based on the notion of a ray.

\begin{definition} A metric space $(X,\dist)$ is called a {\em ray} if $X$ is a $1$-subline containing a point  $o\in X$ such that for every $r\in\dist[X^2]$ the sphere $\Sf(o;r)$ is a singleton.
\end{definition}

Observe that no ray is a $2$-subline. %Let us recall \cite[??]{??} that the {\em rank} of an abelian group $G$ is the largest cardinality of a subset $B\subseteq G$ such that for any pairwise distinct elements $g_1,\dots,g_n\in G$ and any integer numbers $z_1,\dots,z_n$, the equality $z_1g_1+\dots+z_ng_n=0$ implies $z_1g_1=\dots=z_ng_n=0$.

\begin{theorem}\label{t:main3} Let $G$ be a subgroup of the additive group $\IQ$ of rational numbers. A metric space $X$ is isometric to $G_+$ if and only if $X$ is a ray with $\dist[X^2]=G_+$.
\end{theorem}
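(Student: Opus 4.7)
My plan is to prove the substantive reverse direction by first embedding the ray $X$ isometrically into $\IR$ via Menger's theorem, then extracting from the ray condition a sign function on $G_+$ whose constancy will follow from a Euclidean-algorithm argument.

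The forward direction is routine: $G_+\subseteq\IR$ with the Euclidean metric is a subline, and $\Sf(0;r)=\{r\}$ in $G_+$, so $G_+$ is a ray with $\dist[G_+^2]=G_+$.

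For the reverse, let $X$ be a ray with basepoint $o$ and $\dist[X^2]=G_+$. If $G=\{0\}$, then $X=\{o\}$ and we are done, so assume $G\ne\{0\}$: then $G_+$ is infinite, hence so is $X$, and in particular $|X|\ne 4$. Combined with $X$ being a subline, Corollary~\ref{c:1} yields an isometric embedding $X\hookrightarrow\IR$; after translating so that the image of $o$ is $0$, we may assume $X\subseteq\IR$ with $o=0$, and then $X\subseteq G$ because $|x-0|\in G_+$ for every $x\in X$. The ray condition tells us that for each $r\in G_+\setminus\{0\}$ exactly one of $+r,-r$ belongs to $X$; recording this by $\epsilon\colon G_+\setminus\{0\}\to\{+1,-1\}$, we get a disjoint partition $G_+\setminus\{0\}=A\sqcup B$ with $A=\epsilon^{-1}(+1)$ and $B=\epsilon^{-1}(-1)$. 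It suffices to show that $A=\emptyset$ or $B=\emptyset$, since then $X=G_+$ or $X=-G_+$, both isometric to $G_+$.

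The main obstacle is showing that $A$ or $B$ is empty. Suppose both are nonempty and pick $a\in A$, $b\in B$. The $1$-subline property forces both $\Sf(a;b)$ and $\Sf(-b;a)$ to be nonempty; enumerating the at most two candidate points of each sphere in $\IR$ and invoking the disjointness $A\cap B=\emptyset$, I extract the following \emph{subtractive rule}: whenever $a\in A$ and $b\in B$, the element $|a-b|$ lies on the side of $\max(a,b)$, that is, $a-b\in A$ if $a>b$ and $b-a\in B$ if $a<b$. To turn this into a contradiction I use that $G\subseteq\IQ$: the finitely generated subgroup $\langle a,b\rangle\subseteq G$ is a cyclic subgroup $d\,\IZ$ of $\IQ$, so Euclidean reduction of $(a,b)$ terminates at $d$ in finitely many steps, with successive remainders alternating between $A$ and $B$ under the rule. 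At the terminating step, where one remainder is an integer multiple of the next, iterating the rule one more time forces $d\in A\cap B$, a contradiction. Therefore $A=\emptyset$ or $B=\emptyset$, as required.
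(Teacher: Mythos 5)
Your proof is correct, and it takes a genuinely different route from the paper's. The paper proves the nontrivial direction by reduction to its Theorem~\ref{t:4.1}, the metric characterization of $a\IZ_+$: after the same Menger embedding with $o=0$, it splits into the case of finitely generated $G$ (then $G$ is cyclic and Theorem~\ref{t:4.1} applies at once; that theorem is itself proved by an induction that builds $0,a,2a,\dots$ inside $X$ using $1$-sphericity centered at midpoints) and the case of infinitely generated $G$, written as an increasing union $G=\bigcup_{n}a_n\IZ$ of cyclic subgroups, where each slice $X_n=X\cap a_n\IZ$ is checked to be a ray with $\dist[X_n^2]=a_n\IZ_+$, Theorem~\ref{t:4.1} gives $X_n=\pm a_n\IZ_+$, and coherence of signs across the slices yields $X=\pm G_+$. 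You avoid Theorem~\ref{t:4.1} and the case split entirely: the sign partition $G_+\setminus\{0\}=A\sqcup B$ together with your subtractive rule handles all $G\subseteq\IQ$ uniformly. Your sketch of the rule does check out: if $a\in A$, $b\in B$, $a>b$ and $a-b\notin X$, then nonemptiness of $\Sf(a;b)$ forces $a+b\in X$ while nonemptiness of $\Sf(-b;a)$ forces $-(a+b)\in X$, contradicting the singleton sphere at $0$ (the case $a<b$ is symmetric); and the subtractive Euclidean algorithm on a pair from $A\times B$ stays inside the cyclic group $\langle a,b\rangle=d\IZ$, keeps the invariant (first entry in $A$, second in $B$), strictly decreases the sum by at least $d$ per step, and can only halt when the two entries coincide, i.e., at an element of $A\cap B=\emptyset$, which is the desired contradiction. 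Both arguments rest on the same two external inputs, Menger's Theorem~\ref{t:main1} and the fact that finitely generated subgroups of $\IQ$ are cyclic, but you invoke the latter only to guarantee termination of the subtraction process; this makes your argument shorter and uniform in $G$, and it localizes exactly where the hypothesis $G\subseteq\IQ$ enters (for $a=1$, $b=\sqrt 2$ the process never terminates, consistent with Example~\ref{ex:1}). What the paper's longer route buys in exchange is Theorem~\ref{t:4.1} itself, a standalone characterization of $a\IZ_+$ of independent interest, which your approach bypasses.
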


\begin{corollary} A metric space $X$ is isometric to $\IZ_+$ if and only if $X$ is a ray with $\dist[X^2]=\IZ_+$.
\end{corollary}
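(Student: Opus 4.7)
The plan is to recognize this corollary as the single instance $G:=\IZ$ of Theorem~\ref{t:main3}. Since $\IZ$ is a subgroup of the additive group $\IQ$ and $\IZ_+=\IZ\cap[0,\infty)$, the theorem applies verbatim, yielding both implications simultaneously.

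Concretely, for the ``only if'' direction I would check directly that $\IZ_+$, regarded as a subspace of $\IR$ with the Euclidean metric, satisfies the definition of a ray with $\dist[\IZ_+^2]=\IZ_+$: it is a subline because it embeds into $\IR$; it is $1$-spherical since for any $c\in\IZ_+$ and any $r\in\IZ_+\setminus\{0\}$ the point $c+r\in\IZ_+$ lies in $\Sf(c;r)$; and taking $o:=0$ gives $\Sf(o;r)=\{r\}$, a singleton, for every $r\in\IZ_+$. For the ``if'' direction, given $X$ a ray with $\dist[X^2]=\IZ_+$, the conclusion that $X$ is isometric to $\IZ_+$ is precisely the content of Theorem~\ref{t:main3} with $G=\IZ$. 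There is no real obstacle here: the work is all packed into Theorem~\ref{t:main3}, and this corollary merely records the most classical case.
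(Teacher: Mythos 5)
Your proposal is correct and matches the paper exactly: the corollary is stated immediately after Theorem~\ref{t:main3} and is intended as the instance $G=\IZ$, which is precisely your argument. Your explicit verification of the ``only if'' direction (that $\IZ_+$ is a ray with $\dist[\IZ_+^2]=\IZ_+$) is a harmless addition, since that direction is already contained in the ``only if'' part of Theorem~\ref{t:main3}.
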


\begin{corollary} A metric space $X$ is isometric to $\IQ_+$ if and only if $X$ is a ray with $\dist[X^2]=\IQ_+$.
\end{corollary}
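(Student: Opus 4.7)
The plan is to obtain this corollary as an immediate specialization of Theorem~\ref{t:main3}. Since $\IQ$ is a subgroup of itself, setting $G=\IQ$ in Theorem~\ref{t:main3} yields the desired equivalence: a metric space $X$ is isometric to $\IQ_+=G_+$ if and only if $X$ is a ray with $\dist[X^2]=\IQ_+=G_+$. Thus no new argument is required; one simply invokes the preceding theorem with the particular choice $G=\IQ$.

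If some justification were felt necessary, one could unpack the two directions separately. For the forward direction, a quick check confirms that $\IQ_+$ itself is a ray with $\dist[\IQ_+^2]=\IQ_+$: the origin $0\in\IQ_+$ witnesses the ray condition because for each $r\in\IQ_+$ the sphere $\Sf(0;r)$ in $\IQ_+$ is exactly $\{r\}$, and the Triangle Equality holds for any three reals, so $\IQ_+$ is a subline; the 1-spherical property is easy because for any center $c\in\IQ_+$ and any $r\in\dist[\IQ_+^2]=\IQ_+$, the point $c+r\in\IQ_+$ lies on the sphere. Any metric space isometric to $\IQ_+$ inherits these properties. The backward direction is precisely what Theorem~\ref{t:main3} supplies when $G=\IQ$.

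Because the corollary reduces entirely to citing Theorem~\ref{t:main3}, there is no real obstacle here; the substantive work is hidden in Theorem~\ref{t:main3} itself, where the delicate issue is constructing an isometry from an abstract ray $X$ with $\dist[X^2]=G_+$ onto $G_+$ using the rationality (and hence divisibility properties) of the subgroup $G\subseteq\IQ$.
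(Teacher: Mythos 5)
Your proposal is correct and matches the paper's approach exactly: the corollary is stated there as an immediate consequence of Theorem~\ref{t:main3}, obtained by taking $G=\IQ$. The additional verification you sketch for the forward direction is fine but not needed, since the ``only if'' part is already covered by the theorem.
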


\begin{theorem}\label{t:main4} A metric space $X$ is isometric to $\IR_+$ if and only if $X$ is a complete ray with $\IQ_+\subseteq \dist[X^2]$.
\end{theorem}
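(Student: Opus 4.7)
My plan is to reduce to the rational case (Theorem~\ref{t:main3}) by exhibiting a distinguished isometric copy of $\IQ_+$ inside $X$, and then to pass to the real line by the standard completeness argument. The forward implication is immediate, since $\IR_+$ is a complete ray with distinguished point $0$ and $\IQ_+\subseteq\IR_+=\dist[\IR_+^2]$.

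For the converse, fix the distinguished point $o$ of the ray $X$. The ray property packages as a canonical bijection $\phi\colon\dist[X^2]\to X$, where $\phi(r)$ is the unique element of $\Sf(o;r)$; in particular $\phi(0)=o$. The first substantial step is to show that $X'\defeq\phi(\IQ_+)$ is a ray with $\dist[X'^2]=\IQ_+$, so that Theorem~\ref{t:main3} (with $G=\IQ$) forces $\phi|_{\IQ_+}\colon\IQ_+\to X'$ to be an isometry. The workhorse observation is that the Triangle Equality applied to any triple of the form $\{o,\phi(p),\phi(q)\}$ pins $\dist(\phi(p),\phi(q))$ to the set $\{|p-q|,\,p+q\}$. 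This yields $\dist[X'^2]\subseteq\IQ_+$ at once, and also lets me keep $X'$ closed under the 1-subline property: given $c=\phi(q)\in X'$ and $r\in\IQ_+$, the 1-subline property of $X$ produces $d\in X$ with $\dist(c,d)=r$, and then Triangle Equality on $\{o,c,d\}$ forces $\dist(o,d)\in\{|q-r|,\,q+r\}\subseteq\IQ_+$, so $d=\phi(\dist(o,d))\in X'$. The singleton-sphere condition at $o$ is inherited from $X$. I expect this step to be the main obstacle, since everything afterwards is essentially automatic once $\phi|_{\IQ_+}$ is known to be an isometry.

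Once $\phi|_{\IQ_+}$ is isometric, completeness of $X$ and density of $\IQ_+$ in $\IR_+$ let me extend it by Cauchy sequences to an isometric embedding $\tilde\phi\colon\IR_+\to X$: for $r\in\IR_+$ and rationals $q_n\to r$, the identity $\dist(\phi(q_n),\phi(q_m))=|q_n-q_m|$ makes $(\phi(q_n))$ Cauchy in $X$, its limit $\tilde\phi(r)$ depends only on $r$, and continuity of the metric gives $\dist(\tilde\phi(r),\tilde\phi(s))=|r-s|$. Finally, surjectivity of $\tilde\phi$ is forced by the ray property itself: for any $x\in X$ with $r_x\defeq\dist(o,x)$, continuity yields $\dist(o,\tilde\phi(r_x))=r_x$, so that both $x$ and $\tilde\phi(r_x)$ lie in the singleton sphere $\Sf(o;r_x)$ and therefore coincide.
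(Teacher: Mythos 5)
Your proposal is correct and follows essentially the same route as the paper: both proofs single out the set of points lying at rational distance from the distinguished point $o$, verify that this set is a ray with distance set $\IQ_+$, invoke Theorem~\ref{t:main3} to identify it with $\IQ_+$, and then use completeness together with the singleton sphere at $o$ to conclude that $X$ is isometric to $\IR_+$. The only difference is one of implementation: the paper first realizes $X$ as a subspace of $\IR$ via Theorem~\ref{t:main1} and argues with closures and signs inside the line, whereas you work intrinsically (Triangle-Equality pinning, Cauchy-sequence extension, surjectivity via the singleton sphere), which is equally sound.
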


The completeness cannot be removed from Theorem~\ref{t:main4} as shown by the following example.

\begin{example}\label{ex:1} For every subgroup $G\subseteq \IR$ containing nonzero elements $a,b\in G$ such that $b\notin \IQ{\cdot}a\subseteq G$, there exists a dense submonoid $X$ of $\IR$ such that $X$ is a ray with $d[X^2]=G_+$ and $X$ is not isometric to $G_+$.
\end{example}

Example~\ref{ex:1} shows that (in contrast to Theorem~\ref{t:main3}) Theorem~\ref{t:main4} does not hold for arbitrary subgroups of the real line. The submonoid $X$ in Example~\ref{ex:1} is the image of $G_+$ under a suitable additive bijective function $\Phi:G\to G$. A function $\Phi:G\to G$ on a group $G$ is {\em additive} if $\Phi(x+y)=\Phi(x)+\Phi(y)$ for all $x,y\in G$. Example~\ref{ex:1} suggests the following open

\begin{problem}\label{t:ray-field}   Is every ray $X$ with $\dist[X^2]=\IR_+$ isometric to the metric subspace $\Phi[\IR_+]$ of $\IR$ for some injective additive function $\Phi:\IR\to \IR$?
\end{problem}

By Theorem~\ref{t:main2}, every $2$-subline is isometric to a subgroup of the real line. In this context it would be interesting to know a classification of $1$-sublines. Observe that a metric subspace $X$ of the real line is a $1$-subline if and only if $X$ is $1$-spherical if and only if $X$ is semiaffine in the group $\IR$. A subset $X$ of an Abelian group $G$ is called {\em semiaffine} if for every $x,y,z\in X$ the doubleton $\{x+y-z,x-y+z\}$ intersects $X$. Semiaffine sets in Abelian groups were characterized in \cite{BBKR2} as follows.

\begin{theorem}\label{t:semiaffine} A subset $X$ of an Abelian group $G$ is semiaffine if and only if one of the following conditions holds:
\begin{enumerate}
\item $X=(H+a)\cup (H+b)$ for some subgroup $H$ of $G$ and some elements $a,b\in X$;
\item $X=(H\setminus C)+g$ for some $g\in G$, some subgroup $H\subseteq G$ and some midconvex set $C$ in $H$.
\end{enumerate}
\end{theorem}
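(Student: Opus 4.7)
I would split the argument into sufficiency (a direct verification for each of the two forms) and necessity (a structural analysis that produces the subgroup $H$).

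\emph{Sufficiency.} For case (1), given $x,y,z\in(H+a)\cup(H+b)$, the coset of $x+y-z$ modulo $H$ is determined by which of the two cosets contains each of $x,y,z$. Enumerating the eight possible triples of coset labels, $x+y-z$ fails to lie in $(H+a)\cup(H+b)$ only in the two triples in which the coset labels of $x$ and $y$ coincide while that of $z$ differs; in exactly those two cases, swapping the roles of $y$ and $z$ puts $x-y+z$ into one of the two admissible cosets. For case (2), after translating by $-g$ so that $x,y,z\in H\setminus C$, both combinations $x+y-z$ and $x-y+z$ lie in $H$, and their sum equals $2x$, so $x$ is a midpoint of this pair in $H$; if both combinations lay in $C$, midconvexity of $C$ would force $x\in C$, contradicting $x\in H\setminus C$.

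\emph{Necessity.} I would translate $X$ so that $0\in X$ and then specialise the defining property: setting $z=0$ yields that for all $x,y\in X$ the set $X$ meets $\{x+y,x-y\}$, while setting $x=0$ yields that for all $y,z\in X$ the set $X$ meets $\{y-z,z-y\}$, so the difference set $X-X$ is symmetric. I would then take as candidate subgroup $H$ the group generated by the symmetric part $X\cap(-X)$, study how $X$ sits relative to $H$, and split into two cases according to whether $X$ is contained in a single coset of $H$ in the group it generates or in exactly two cosets. The two-coset situation should deliver form (1) immediately with $a,b$ any representatives of the two cosets, while the one-coset situation identifies $X$ with $H\setminus C$ (up to translation), and one must show that $C$ is midconvex.

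The principal obstacle is precisely that midconvexity step. Given $u,v\in C$ and $m\in H$ with $2m=u+v$, one must produce $x,y,z\in X$ with $x+y-z=u$ and $x-y+z=v$; the semiaffine hypothesis applied to this specific triple would then give $m=x\notin X$, i.e.\ $m\in C$, as required. Exhibiting such a triple from the data $u,v\in H\setminus X$ while exploiting $0\in X$ and the symmetry of the generators of $H$ is the combinatorial heart of the proof, and is the content of the dedicated argument carried out in \cite{BBKR2}. A secondary, though easier, obstacle is ruling out the appearance of more than two cosets in the split case, which I would handle by showing that three distinct cosets would allow one to choose $x,y,z$ in three different cosets and force both $x+y-z$ and $x-y+z$ into further cosets absent from $X$.
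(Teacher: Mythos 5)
The first thing to note is that this paper contains no proof of Theorem~\ref{t:semiaffine}: it is stated as a result imported from the companion preprint \cite{BBKR2}, so there is no internal argument to compare yours against. Judged on its own terms, your proposal splits into a sufficiency part that is complete and a necessity part that is not a proof. The sufficiency verification is correct: in case (1) the coset-label bookkeeping does show that $x+y-z$ can escape $(H+a)\cup(H+b)$ only when $x,y$ carry the same label and $z$ the other one, and then $x-y+z$ lies in the coset of $z$; in case (2) the identity $(x+y-z)+(x-y+z)=2x$ combined with midconvexity of $C$ is exactly the right argument.

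The necessity part has a concrete flaw beyond the step you openly outsource. Your candidate subgroup $H\defeq\langle X\cap(-X)\rangle$ (after translating so that $0\in X$) is wrong, and the dichotomy ``$X$ lies in a single coset of $H$ or in exactly two'' is not exhaustive. Take $G=\IZ$ and $X=\IZ_+$. This set is semiaffine, since $(x+y-z)+(x-y+z)=2x\ge 0$ forces one of the two combinations to be nonnegative, and it has form (2) with $H=\IZ$ and $C=\{n\in\IZ:n<0\}$; but $X\cap(-X)=\{0\}$, so your candidate subgroup is trivial and $X$ meets infinitely many of its cosets. The structural skeleton you propose therefore collapses on the simplest one-sided example: the subgroup $H$ of the theorem cannot be recovered from the symmetric part of $X$. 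Since, in addition, you explicitly defer the midconvexity step --- which you yourself call ``the combinatorial heart of the proof'' --- to \cite{BBKR2}, the necessity direction remains unproved: what you have is a correct proof of the easy implication and a plan for the hard one whose first structural move already fails.
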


A subset $X$ of a group $G$ is called {\em midconvex} in $G$ if for every $x,y\in X$ the set $$\frac{x+y}2\defeq\{z\in G:2z=x+y\}$$ is a subset of $X$.

The following characterization of $1$-sublines follows from Theorems~\ref{t:main1} and \ref{t:semiaffine}.

\begin{theorem}\label{t:1sub} A metric space $X$ is a $1$-subline if and only if $X$ is isometric to one of the following metric spaces:
\begin{enumerate}
\item the $\ell_1$-rectangle $\square_a^b$ for some positive real numbers $a,b$;
\item $(H+a)\cup(H+b)$ for some subgroup $H$ of $\IR$ and some real numbers $a,b$;
\item $H\setminus C$ for some subgroup $H$ of $\IR$ and some midconvex set $C$ in the group $H$.
\end{enumerate}
\end{theorem}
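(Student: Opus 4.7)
The plan is to reduce the characterization to Theorems~\ref{t:main1} and~\ref{t:semiaffine}. The key bridging observation is that, for a subset of $\IR$, being $1$-spherical coincides exactly with being semiaffine in the group $\IR$.

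For necessity, let $X$ be a $1$-subline. By Theorem~\ref{t:main1}, either $X$ is an $\ell_1$-rectangle, giving case~(1), or $X$ embeds isometrically into $\IR$. In the second case, identify $X$ with its image in $\IR$ and apply the bridging observation: given $x,y,z\in X$ with $y\ne z$, set $r\defeq|y-z|\in\dist[X^2]\setminus\{0\}$ and pick $w\in\Sf(x;r)\subseteq X$; since $w-x=\pm(y-z)$, the doubleton $\{x+y-z,x-y+z\}$ meets $X$, so $X$ is semiaffine in $\IR$ (the case $y=z$ is trivial as both elements of the doubleton equal $x$). Theorem~\ref{t:semiaffine} then places $X$ into case~(2) or case~(3).

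For sufficiency, each listed space must be verified to be a $1$-subline. The rectangle $\square_a^b$ has only the three distances $2a$, $2b$, $2a+2b$, and a direct inspection at its four vertices confirms both the Triangle Equality on every triple and the $1$-spherical condition (each realized sphere contains at least one point, exactly one when $a\ne b$). For cases~(2) and~(3), the spaces are subsets of $\IR$, hence sublines by the linear order on $\IR$, and they are semiaffine by Theorem~\ref{t:semiaffine}. The remaining step is the reverse direction of the bridging observation: given $c\in X$ and $r\in\dist[X^2]\setminus\{0\}$, choose $u,v\in X$ with $r=|v-u|$ and apply semiaffinity to the triple $(c,v,u)$ to extract a point of $X$ lying in $\{c+r,c-r\}\subseteq\Sf(c;r)$.

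The only place that requires care is the side condition $r\in\dist[X^2]$ in the definition of $1$-spherical: because this condition only demands nonemptiness of spheres of \emph{realized} radii, one must translate carefully between the witnesses $y,z$ of $r$ in semiaffinity and the witnesses $u,v$ of $r$ in $1$-sphericity, and treat the degenerate case $y=z$ separately. Apart from this routine bookkeeping, the theorem follows mechanically from Theorems~\ref{t:main1} and~\ref{t:semiaffine}.
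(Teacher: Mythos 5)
Your proposal is correct and takes essentially the same route as the paper: the paper derives Theorem~\ref{t:1sub} from Theorems~\ref{t:main1} and~\ref{t:semiaffine} via precisely your bridging observation, stated there as ``a metric subspace $X$ of the real line is a $1$-subline if and only if $X$ is $1$-spherical if and only if $X$ is semiaffine in the group $\IR$.'' Your write-up merely supplies the details the paper leaves implicit (the two directions of that equivalence, the degenerate case $y=z$, and the verification that $\square_a^b$ is a $1$-subline), all of which check out.
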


 Midconvex sets in  Abelian groups were characterized in \cite{BBKR1} as follows.

\begin{theorem}\label{t:1} A subset $X$ of an Abelian group $G$ is midconvex if and only if for every $g\in G$ and $x\in X$, the set $\{n\in\IZ:x+ng\in X\}$ is equal to $C\cap H$ for some order-convex set $C\subseteq \IZ$ and some subgroup $H\subseteq \IZ$ such that the quotient group $\IZ/H$ has no elements of even order.
\end{theorem}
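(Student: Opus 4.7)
The approach is to recast midconvexity as a combinatorial condition on the ``trace sets'' $S_{x,g}\defeq\{n\in\IZ:x+ng\in X\}$, each of which contains $0$. The central observation is that $X$ is midconvex if and only if every $S_{x,g}$ is \emph{midpoint-closed}, in the sense that $(m+n)/2\in S_{x,g}$ whenever $m,n\in S_{x,g}$ with $m+n$ even. One direction is the definition applied to $x+mg,x+ng\in X$; the converse exploits the fact that any midpoint $z$ of $x,y\in X$ can be written as $z=x+g$ with $y=x+2g$, so $\{0,2\}\subseteq S_{x,g}$ and midpoint-closedness gives $1\in S_{x,g}$, hence $z\in X$. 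This reduces the theorem to the purely combinatorial statement: a set $S\subseteq\IZ$ with $0\in S$ is midpoint-closed if and only if $S=C\cap H$ for some order-convex $C\subseteq\IZ$ and subgroup $H=d\IZ$ with $d$ odd (or $d=0$).

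The ($\Leftarrow$) half is a direct check: if $a,b\in C\cap d\IZ$ with $a+b$ even and $d$ odd, then $d\mid a$ and $d\mid b$ imply $d\mid(a+b)/2$, and order-convexity of $C$ places $(a+b)/2$ in $C$. For ($\Rightarrow$), set $H\defeq\langle S-S\rangle=d\IZ$. The key property is that the difference set $D\defeq S-S$ is closed under halving of even elements: from $e=a-b\in D$ with $e$ even we have $a+b$ even, $(a+b)/2\in S$, and $e/2=a-(a+b)/2\in D$. If $d$ were positive and even, every nonzero $e\in D$ would be a multiple of $d$, hence even, so $e/2\in D\subseteq d\IZ$ would force $2d\mid e$, contradicting $d=\gcd(D\setminus\{0\})$. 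Thus $d$ is odd (or $d=0$, the trivial case $S=\{0\}$), and $S\subseteq d\IZ$. Writing $S=dT$ reduces the problem to the \emph{interval lemma}: a midpoint-closed $T\subseteq\IZ$ with $0\in T$ and $\gcd(T-T)=1$ is order-convex in $\IZ$.

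This interval lemma is the main obstacle. I would argue it in two stages. First, when $T_+\defeq T\cap\IN$ is nonempty, its minimum $c$ is odd (else $c/2\in T_+$ contradicts minimality), and a case analysis on the parity of a hypothetical smallest $t\in T_+\setminus c\IZ$ --- using $(c+t)/2\in T_+\setminus c\IZ$ for odd $t$, and $t/2\in T_+\setminus c\IZ$ for even $t$, both strictly smaller than $t$ --- shows $T_+\subseteq c\IZ$, and symmetrically $T_-\subseteq c'\IZ$ for some odd $c'$. Taking the midpoint of $c$ and $-c'$ places $(c-c')/2$ in $T$, and a sign analysis forces $c=c'$; combined with $\gcd(T)=\gcd(T-T)=1$ (which holds since $0\in T$), this yields $c=c'=1$. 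Second, given $0,1,n\in T$ with $n\ge 1$, prove $\{0,1,\dots,n\}\subseteq T$ by strong induction on $n$: one of $n/2$ ($n$ even) or $(n+1)/2$ ($n$ odd) lies in $T$, so the inductive hypothesis yields an initial interval $[0,M_0]\subseteq T$ with $M_0\ge n/2$; iterating the midpoint construction between $n$ and parity-compatible elements of $[0,M_k]$ extends this to $[0,M_{k+1}]\subseteq T$ with $M_{k+1}\approx(M_k+n)/2$, reaching $n-1$ after $O(\log n)$ steps. The symmetric argument on $T_-$ then yields that $T$ is an interval, completing the proof.
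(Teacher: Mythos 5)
A caveat before the comparison: this paper does not actually prove Theorem~\ref{t:1} --- it is quoted as a known result from the separate preprint \cite{BBKR1}, so there is no in-paper proof to measure your argument against, and I can only assess it on its own merits. On those merits it is correct. Your reduction is exactly the right move: midconvexity of $X$ is equivalent to midpoint-closedness of every trace set $S_{x,g}$, since an arbitrary $z$ with $2z=x+y$ can be written as $z=x+g$, $y=x+2g$ for $g\defeq z-x$, so quantifying over all $g$ absorbs the ``all solutions of $2z=x+y$'' clause in the definition of midconvexity. The halving argument on $D=S-S$ correctly forces $d\defeq\gcd(D\setminus\{0\})$ to be odd (if $d$ were even, every nonzero $e\in D$ would satisfy $2d\mid e$), with the degenerate case $S=\{0\}$ trivial. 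Your interval lemma also holds up under checking: in Stage~1 the minimal-counterexample parity analysis does give $T_+\subseteq c\IZ$ with $c=\min T_+$ odd, the midpoint of $c$ and $-c'$ forces $c=c'$ by the sign analysis you indicate, and $\gcd(T)=\gcd(T-T)=1$ (valid because $0\in T$) then yields $c=1$; in Stage~2 the midpoints of $n$ with the parity-matching elements of $[0,M_k]$ form a consecutive block of integers whose left end $\lceil n/2\rceil$ is at most $M_k+1$, so the union with $[0,M_k]$ is again an interval, and the integer gap $n-1-M_k$ at least halves at each step, hence vanishes after finitely many iterations. Two steps are left implicit but are immediate and not gaps: the transfer of midpoint-closedness and of $\gcd(T-T)=1$ under the rescaling $S=dT$, and the final repackaging of the conclusion --- once $T$ is order-convex one must still write $S=dT$ as $C\cap H$, which works with $H=d\IZ$ and $C$ the order-convex hull of $dT$ in $\IZ$ (order-convexity of $T$ is exactly what gives $C\cap d\IZ=dT$). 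So your proposal is a complete and correct outline; whether it coincides with the argument in \cite{BBKR1} cannot be determined from this paper.
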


A subset $C$ of a subgroup $H$ of $\IR$ is called {\em order-convex} in $H$ if for any $x,y\in C$, the order interval $\{z\in H:x\le y\le z\}$ is a subset of $C$.

Midconvex sets in subgroups of the group $\IQ$ were characterized in \cite{BBKR1} as follows.

\begin{theorem}\label{t:3} Let $H$ be a subgroup of $\IQ$. A nonempty set $ X\subseteq H$ is midconvex in $H$ if and only if $X=C\cap(P+x)$ for some order-convex set $C\subseteq H$, some $x\in X$ and some subgroup $P$ of $H$ such that the quotient group $H/P$ contains no elements of even order.
\end{theorem}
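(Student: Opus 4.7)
The plan is to reduce the problem to the case $H=\IZ$, where Theorem~\ref{t:1} applied with $g=1$ and $x=0$ delivers the desired decomposition directly; the reduction is powered by the fact that every finitely generated subgroup of $\IQ$ is cyclic. The \emph{if} direction is immediate: if $X=C\cap(P+x)$ and $y,z\in X$ admit a midpoint $w=(y+z)/2\in H$, then $2(w-x)=(y-x)+(z-x)\in P$, and the absence of $2$-torsion in $H/P$ forces $w-x\in P$, while $w$ lies in $H$ between $y,z\in C$, so $w\in C$ by order-convexity; hence $w\in X$.

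For the \emph{only if} direction, translate so that $0\in X$ and set
\[ P:=\langle X\rangle,\qquad C:=\{h\in H:\exists\,a,b\in X,\ a\le h\le b\}. \]
Then $C$ is order-convex in $H$, $X\subseteq P\cap C$, and the remaining claims (that $H/P$ has no element of even order, and that $P\cap C\subseteq X$) both rely on the following cyclic reduction. Given any finite list $h,z_1,\dots,z_r\in H$, the finitely generated subgroup $H_0:=\langle h,z_1,\dots,z_r\rangle$ of $\IQ$ is cyclic, say $H_0=e\IZ$, and $X_0:=X\cap H_0$ is midconvex in $H_0$, since any midpoint of two elements of $X_0$ lying in $H_0\subseteq H$ already lies in $X$ by midconvexity of $X$ in $H$. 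Identifying $H_0$ with $\IZ$ via $ke\leftrightarrow k$, Theorem~\ref{t:1} applied with $g=1$, $x=0$ forces the image $X_0'\subseteq\IZ$ to equal $C'\cap m\IZ$ for some order-convex $C'\ni 0$ and some $m\in\{0,1,3,5,\dots\}$; a short check using that an interval in $m\IZ$ of cardinality $\ge 2$ contains consecutive multiples of $m$ yields $\langle X_0'\rangle\in\{\{0\},m\IZ\}$, a subgroup of $\IZ$ with no $2$-torsion in the quotient.

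Using the reduction with $h\in H$ and elements $z_i=x_i\in X$ satisfying $2h=\sum n_ix_i$, the $2$-purity of $\langle X_0\rangle$ in $H_0$ yields $h\in\langle X_0\rangle\subseteq P$, showing that $H/P$ has no element of order $2$. Using it with $h\in P\cap C$, writing $h=\sum n_iy_i$ with $y_i\in X$ and bracketing $a\le h\le b$ with $a,b\in X$, the image of $h$ in $\IZ$ lies in $\langle X_0'\rangle\subseteq m\IZ$ and, being bracketed by the images of $a,b\in X_0'\subseteq C'$, also lies in $C'$; hence the image lies in $C'\cap m\IZ=X_0'$, so $h\in X_0\subseteq X$.

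The main obstacle is the cyclic reduction itself: one must verify that midconvexity descends from $(X,H)$ to $(X\cap H_0,H_0)$, and---critically---that the subgroup $\langle X\cap H_0\rangle$ coincides with the odd-modulus subgroup identified by Theorem~\ref{t:1}, not merely a proper sub-subgroup of it. This is what makes the globally defined $P=\langle X\rangle$ itself the correct subgroup, avoiding any further ``$2$-pure hull'' construction.
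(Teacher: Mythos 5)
Your proof is correct, but note that there is nothing in this paper to compare it against: Theorem~\ref{t:3}, like Theorem~\ref{t:1}, is quoted from the preprint \cite{BBKR1} and the paper contains no proof of either. What you have produced is a self-contained derivation of Theorem~\ref{t:3} from Theorem~\ref{t:1}, and it holds up. The ``if'' direction is fine: $2(w-x)\in P$ together with the absence of even-order (in particular order-$2$) elements in $H/P$ gives $w\in P+x$, and order-convexity of $C$ gives $w\in C$. In the ``only if'' direction, the choices $P=\langle X\rangle$ and $C$ the order-convex hull of $X$ are the right ones, and the cyclic reduction is sound: finitely generated subgroups of $\IQ$ are cyclic (a fact this paper itself invokes in the proof of Theorem~\ref{t:main3}), midconvexity visibly passes from $(X,H)$ to $(X\cap H_0,H_0)$, and your ``short check'' is valid --- if $C'\cap m\IZ$ has two points and $m\ne 0$, order-convexity of $C'$ traps two consecutive multiples of $m$, so $m\in\langle X_0'\rangle$ and hence $\langle X_0'\rangle\in\{\{0\},m\IZ\}$ is $2$-pure in $\IZ$, which is exactly what is needed both for the implication $2h\in P\Rightarrow h\in P$ and for $P\cap C\subseteq X$. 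Three cosmetic points to close in writing: (i) you verify only that $H/P$ has no element of order $2$, so add the one-line remark that a group without elements of order $2$ has no elements of even order (if $g+P$ had order $2k$, then $kg+P$ would have order $2$); (ii) the identification $H_0=e\IZ\cong\IZ$ reverses order when $e<0$, so either choose the generator $e>0$ or observe that order-convexity and bracketing are insensitive to a global order reversal; (iii) the degenerate case $H_0=\{0\}$ (where $h=0\in X\subseteq P$) should be flagged as trivial, since there is then no identification with $\IZ$. None of these affects correctness.
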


The necessary information on metric spaces can be found in \cite[Ch.4]{Eng}; for basic notions of group theory, we refer the reader to the textbook \cite{Rob}.

\section{Proof of Theorem~\ref{t:main1}}

Since we have found no published proof of Theorem~\ref{t:main1}, we present the detailed proof of this theorem in this section. Bowers and Bowers write in \cite{Bowers} that Theorem~\ref{t:main1} ``can be proved by chasing around betweenness relations among four points of $X$''. This indeed can be done with the help of the following lemma.

\begin{lemma}\label{l:Ravsky} If a finite subline $X$ is not an $\ell_1$-rectangle, then $X$ is isometric to a subspace of the real line.
\end{lemma}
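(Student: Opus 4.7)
The plan is to pick a pair $a, b \in X$ realizing the diameter $ab \defeq \max\{uv : u, v \in X\}$ (such a pair exists by finiteness) and define $f : X \to \IR$ by $f(x) \defeq ax$. The aim is to show that $f$ is an isometric embedding, under the hypothesis that $X$ is not an $\ell_1$-rectangle. A first easy observation: for every $x \in X$, the Triangle Equality on $\{a, b, x\}$ together with $ab \ge ax, bx$ forces $ab = ax + xb$, so $f(b) = ab$ and $|f(x) - f(b)| = ab - ax = xb$. This immediately handles every pair that meets $\{a, b\}$.

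For distinct $x, y \in X \setminus \{a, b\}$, the Triangle Equality on $\{a, x, y\}$ has three outcomes; two of them ($ax = ay + yx$ and $ay = ax + xy$) give $xy = |ax - ay| = |f(x) - f(y)|$ directly, so the only obstruction to $f$ being an isometry is the ``bad'' case $xy = ax + ay$. In this bad case, applying the Triangle Equality to $\{b, x, y\}$ and using $bx = ab - ax$, $by = ab - ay$, two of its three subcases collapse to $x = a$ or $y = a$, so one is forced to $xy = bx + by$. Combining this with $xy = ax + ay$ yields $xy = ab$, $ax + ay = ab$, $bx = ay$, and $by = ax$---exactly the equalities of Definition~\ref{d:rectangle}, witnessing that $\{a, b, x, y\}$ is an $\ell_1$-rectangle.

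It remains to rule out the coexistence of a fifth point with this rectangle, and this is the main technical obstacle. Suppose toward a contradiction that some $e \in X \setminus \{a, b, x, y\}$ exists, and set $p \defeq ae$, $q \defeq ex$, $r \defeq ey$. Since $ab = xy$ is the diameter, the Triangle Equality applied to $\{a, b, e\}$ and to $\{x, y, e\}$ forces $eb = ab - p$ and $q + r = ab$. The three Triangle Equality alternatives on each of $\{a, x, e\}$ and $\{b, y, e\}$ pin $q$ and $r$ to one of three explicit linear expressions in $p$; substituting the nine resulting combinations into $q + r = ab$ and using $0 < ax, ay < ab$ together with $ax + ay = ab$ shows that every combination either yields a numerical contradiction or forces one of $p, q, r$ to vanish, i.e.\ $e \in \{a, b, x, y\}$. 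Hence no such $e$ exists, so $X = \{a, b, x, y\}$ would itself be an $\ell_1$-rectangle, contradicting the hypothesis. Each of the nine cases is a short linear calculation, but careful bookkeeping is needed to confirm that no combination survives. Therefore the bad case never occurs, and $f$ is the desired isometric embedding of $X$ into $\IR$.
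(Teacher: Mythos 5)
Your overall skeleton is sound and matches the paper's: choose a diameter pair $a,b$, set $f(x)\defeq ax$, note $ab=ax+xb$ for all $x$, reduce any failure of $f$ to a ``bad'' pair with $xy=ax+ay$, and show this forces $\{a,b,x,y\}$ to be an $\ell_1$-rectangle with $xy=ab$, $bx=ay$, $by=ax$. The gap is in the final nine-case analysis, and it is genuine. Write $\alpha\defeq ax$, $\beta\defeq ay$, so $\alpha+\beta=ab$, $bx=\beta$, $by=\alpha$. The alternative $ex=ea+ax$ from the triple $\{a,x,e\}$ gives $q=p+\alpha$, and the alternative $eb=ey+yb$ from the triple $\{b,y,e\}$ gives $r=(ab-p)-\alpha=\beta-p$; substituting into $q+r=ab$ yields the identity $\alpha+\beta=ab$, which is no contradiction, and for any $0<p<\beta$ none of $p,q,r$ vanishes. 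The symmetric combination $q=p-\alpha$, $r=ab-p+\alpha$ survives in the same way. A concrete witness: take the $\ell_1$-rectangle $a=(0,0)$, $x=(1,0)$, $b=(1,1)$, $y=(0,1)$ together with $e=(0,\tfrac12)$, the midpoint of the side $ay$. Then $ae=ye=\tfrac12$, $xe=be=\tfrac32$, and the Triangle Equality holds on all four triples you use, $\{a,b,e\}$, $\{x,y,e\}$, $\{a,x,e\}$, $\{b,y,e\}$ (and even on $\{a,y,e\}$), with $e$ distinct from $a,b,x,y$. So your claim that every one of the nine combinations dies on the equation $q+r=ab$ is false.

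What kills the surviving configurations is a triple you never examine, namely $\{b,x,e\}$ (or $\{a,y,e\}$ in the symmetric case): there $bx=\beta$, $be=ab-p$, $xe=p+\alpha$, and none of the three Triangle Equality alternatives can hold unless $p=0$ or $p=\beta$; in the witness above, $bx=1$ while $be=xe=\tfrac32$, so that triple does not embed in $\IR$. This is precisely where the paper's proof spends its effort: after producing the rectangle and the fifth point $z$, it brings in the triangles $\{x,z,b\}$, $\{y,z,a\}$, $\{y,z,b\}$, observes that none of their sides can have length $D=ab$, and uses perimeter estimates on them to show that all three alternatives of the Triangle Equality fail for the triple $\{a,x,z\}$. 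Your argument can be repaired by adding an analysis of $\{b,x,e\}$ and $\{a,y,e\}$ to dispose of the two surviving cases, but as written the key step does not go through.
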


\begin{proof} If $|X|\le 1$, then $X$ is isometric to a subspace of any nonempty metric space, including the real line. So, we assume that $|X|>1$. Since $X$ is finite, there exist points $a,b\in X$ such that $ab=D\defeq\max\{xy:x,y\in X\}$.  For every point $x\in X$, the maximality of $ab=D$ and the Triangle Equality for the points $\{a,x,b\}$ ensure that
\begin{equation}\label{eq:axb}
ab=ax+xb.
\end{equation}

We claim that the function $f:X\to\IR$, $f:x\mapsto ax$, is an isometric embedding of $X$ into the real line.

Indeed, otherwise there exist points $x,y\in X$ such that $xy\ne |ax-ay|$.
Then $x\ne a\ne y$ and the Triangle Equality for the points $x,y,a$ implies that $xa+ay=xy$.
Then $yx+xb+by=ya+ax+xb+by=2D$, so the triangle $\{x,y,b\}$ has a side of length $D$.
Taking into account that $x\ne a\ne y$, $xb=ab-ax<D$ and $yb=ab-ay<D$, we conclude that $xy=D$.
Then $xa=xy-ay=xy+yb-ab=yb$ and $xb=xy-by=xy-xa=ay$, which means that $\{x,a,y,b\}$ is an $\ell_1$-rectangle. It follows from $ax+xb=ab=ay+yb$ and $\{x,y\}\cap\{a,b\}=\emptyset$ that
\begin{equation}\label{eq1}
\max\{ax,xb,ay,yb\}<D.
\end{equation}

Since $X$ is not an $\ell_1$-rectangle, there exists a point $z\in X\setminus\{x,a,y,b\}$.  The Triangle Equality for the triangles $\{x,y,z\}$ and $\{a,z,b\}$ implies  $xz+zy=xy=D=ab=az+bz$. Consequently,
\begin{equation}\label{eq2}
\max\{xz,zy,az,bz\}<D.
\end{equation}

By the strict inequalities (\ref{eq1}) and (\ref{eq2}) and the Triangle Equality, no side of the triangles $\{x,z,b\}$,
$\{y,z,a\}$, and $\{y,z,b\}$ has length $D$. Then, by (\ref{eq:axb}) and the equalities $xz+yz=xy=xa+ay$,
$$
\begin{aligned}
xa+az-xz&=xa+az+zb+bx-(xz+zb+bx)>2ab-2D=0,\\
ax+xz-az&=ax+xz+yz+ay-(yz+za+ay)>2xy-2D=0,\\
az+zx-xa&=az+zx-xa+yz+zb+by-(yz+zb+by)\\
&=az+zx+yz+zb-(yz+zb+by)>ab+xy-2D=0.
\end{aligned}
$$
This contradicts the Triangle Equality for the points $a,x,z$.
%Therefore $xy=|ax-ay|$, a contradiction.}
%If $xz=xa+az$, then $xz+zb+xb=xa+az+zb+xb=2ab$,
%and by the Triangle Equality, one side of the triangle $\{x,z,b\}$ has length $D$, which contradicts the strict inequalities (\ref{eq1}) or (\ref{eq2}).
%If $az=ax+xz$ then $yz+za+ay=yz+ax+xz+ay=2xy$,
%and by the Triangle Equality, one side of the triangle $\{y,z,a\}$ has  length $D$, which contradicts the strict inequalities (\ref{eq1}) or (\ref{eq2}).
%If $ax=az+zx$ then $yz+zb+by=yz+zb+ax=yz+zb+az+zx=xy+ab$,
%and by the Triangle Equality, one side of the triangle $\{y,z,b\}$ has  length $D$, which contradicts the strict inequalities (\ref{eq1}) or (\ref{eq2}).
\end{proof}

Now we are able to present a proof of Theorem~\ref{t:main1}. Given a metric space $X$, we need to prove that $X$ is isometric to a subspace of the real line if and only if $X$ is a subline and $X$ is not an $\ell_1$-rectangle.

The ``only if'' part of this characterization is trivial. To prove the ``if'' part, assume that a metric space $X$ is a subline and $X$ is not an $\ell_1$-rectangle. If $X$ is finite, then $X$ is isometric to a subspace of the real line, by Lemma~\ref{l:Ravsky}.
It remains to consider the case of infinite metric space $X$. Pick any distinct points $a,b\in X$. Let $\mathcal F$ be the family of
all finite subsets of $X$ containing $a$ and $b$. By Lemma~\ref{l:Ravsky}, every set $F\in\mathcal F$ is isometric to a
subspace of the real line. Therefore there exists an isometry $f_F:F\to\IR$ such that $f_F(a)=0$ and $f_F(b)=ab$.
For each point $x\in F$ the image $f_F(x)$ is a unique point of $\IR$ such that $|f_F(x)|=|f_F(x)-f_F(a)|=xa$ and
$|f_F(x)-ab|=|f_F(x)-f_F(b)|=xb$. So, $f_F(x)$ is uniquely determined by the distances $xa$ and $xb$, and we can define a function $f:X\to\IR$ assigning to every $x\in X$ the real number $f_F(x)$, where $F$ is an arbitrary set in $\mathcal F$ that contains $x$.

Given any points $x,y\in X$, we can take any set $F\in\mathcal F$ with $x,y\in F$ and conclude that $|f(x)-f(y)|=|f_F(x)-f_F(y)|=xy$, which means that $f$ is an isometric embedding of $X$ into the real line.

\section{Proof of Theorem~\ref{t:main2}}

We divide the proof of Theorem~\ref{t:main2} into two lemmas.

\begin{lemma}\label{l:G1} Every nonempty  $2$-subline $X$ is isometric to a subgroup $G\subseteq \IR$ such that $\dist[X^2]=G_+$.
\end{lemma}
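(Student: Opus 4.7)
The plan is to invoke Theorem~\ref{t:main1} to embed $X$ isometrically into $\IR$, and then exploit the $2$-spherical property to turn the image into a subgroup.

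First I will verify that $X$ is not an $\ell_1$-rectangle, so that Theorem~\ref{t:main1} applies. In any $\ell_1$-rectangle $\{a,b,c,d\}$ with $ac = ab+bc = bd$, the distances from $a$ are $ab$, $ad = bc$, $ac$, and $ac$ is the unique strict maximum; hence $\Sf(a;ac) = \{c\}$ is a singleton, contradicting $2$-sphericity. So Theorem~\ref{t:main1} yields an isometric embedding $X \hookrightarrow \IR$, and I identify $X$ with its image. Picking any $o \in X$ and translating by $-o$ (an isometry of $\IR$), I may further assume $0 \in X \subseteq \IR$.

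The key observation driving the rest is that in $\IR$ every sphere of positive radius has \emph{exactly} two points: $\Sf(c;r) = \{c+r, c-r\}$. So for any $x \in X\setminus\{0\}$, taking $r = |x| \in \dist[X^2]\setminus\{0\}$ and applying $2$-sphericity to the sphere $\Sf(0;|x|) \subseteq \{x,-x\}$ forces both $x$ and $-x$ to lie in $X$; hence $X = -X$. The same move with center $x \in X$ and radius $|y|$ for $y \in X\setminus\{0\}$ forces both $x+y$ and $x-y$ to lie in $X$. Combined, $X$ contains $0$ and is closed under subtraction, so it is a subgroup $G$ of $\IR$. The equality $\dist[X^2] = G_+$ is then automatic, since $|g-h| \in G_+$ for all $g,h \in G$ and every $g \in G_+$ equals $|g - 0|$.

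The only step where things could go wrong is the initial invocation of Theorem~\ref{t:main1}, and that is precisely why the $\ell_1$-rectangle case must be ruled out at the outset. Once $X$ sits inside $\IR$, the two-point-sphere argument is immediate, and I expect this to constitute essentially all of the structural content of the proof.
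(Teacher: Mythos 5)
Your proof is correct and follows essentially the same route as the paper: rule out the $\ell_1$-rectangle so that Theorem~\ref{t:main1} embeds $X$ into $\IR$ with $0\in X$, then use the fact that spheres in $\IR$ have exactly two points to force $\{x-y,x+y\}\subseteq X$ and hence the subgroup structure, with $\dist[X^2]=G_+$ immediate. The only cosmetic difference is in excluding the rectangle: the paper shows any finite $2$-subline with two or more points is impossible (the sphere at maximal distance is a singleton), while you verify directly that the far corner of an $\ell_1$-rectangle yields a singleton sphere --- the same singleton-sphere idea, applied to a special case rather than to all finite spaces.
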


\begin{proof} The space $X$ is infinite, otherwise
there exist points $a,b\in X$ such that $ab=D\defeq\max\{xy:x,y\in X\}$. The Triangle Equality implies that the sphere $\Sf(a,D)$
coincides with the singleton $\{b\}$, witnessing that $X$ is not a $2$-subline.

By Theorem~\ref{t:main1}, the subline $X$ is isometric to a subspace $G$ of the real line.
Being an isometric copy of the nonempty $2$-subline $X$, the space $G$ is a nonempty $2$-subline, too.
Without loss of generality, we can assume that $0\in G\subseteq\IR$.

For every numbers $x\in G$ and $y\in  G\setminus\{0\}$, we have $|y|\in\dist[G^2]=\dist[X^2]$. Since $G$ is a $2$-subline, the sphere $\Sf(x,|y|)=G\cap \{x-|y|,x+|y|\}$ contains at least two points, which implies $\{x-y,x+y\}=\{x-|y|,x+|y|\}\subseteq G$. Consequently,  $G$ is a subgroup of $\IR$ with $\dist[X^2]=\dist[G^2]=G_+$.
\end{proof}

\begin{lemma} Let $G$ be a subgroup of the real line.
A metric space $X$ is isometric to $G$ if and only if $X$ is a $2$-subline with $\dist[X^2]=G_+$.
\end{lemma}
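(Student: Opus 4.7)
The plan is to prove both directions directly, with the nontrivial direction being a short reduction to Lemma~\ref{l:G1}.

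For the ``only if'' direction, suppose $X$ is isometric to $G$. Then it suffices to verify that $G$ itself, viewed as a metric subspace of $\IR$, is a $2$-subline with $\dist[G^2]=G_+$. Since $G\subseteq \IR$, any three points of $G$ satisfy the Triangle Equality, so $G$ is a subline. For any $c\in G$ and any $r\in G_+\setminus\{0\}$, the sphere $\Sf(c;r)$ equals $\{c-r,c+r\}$, and both points lie in $G$ because $G$ is a subgroup; thus $|\Sf(c;r)|\ge 2$, so $G$ is $2$-spherical. Finally, $\dist[G^2]=\{|x-y|:x,y\in G\}=G_+$, using that $G$ is closed under subtraction.

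For the ``if'' direction, assume $X$ is a $2$-subline with $\dist[X^2]=G_+$. By Lemma~\ref{l:G1}, $X$ is isometric to some subgroup $H\subseteq \IR$ with $\dist[X^2]=H_+$. Then $H_+=G_+$, and since any subgroup of $\IR$ is recovered from its positive part via $H=H_+\cup(-H_+)$ (and similarly for $G$), we obtain $H=G$. Hence $X$ is isometric to $G$.

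The only step that requires any thought is the identification $H=G$ at the end, and this is immediate from the observation that a subgroup of $\IR$ is determined by $H_+$; the rest is a straightforward unpacking of the definitions plus an appeal to the previous lemma, so there is no real obstacle here.
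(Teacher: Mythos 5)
Your proof is correct and takes essentially the same approach as the paper: the ``only if'' direction is the routine verification that $G$ itself is a $2$-subline with $\dist[G^2]=G_+$ (which the paper dismisses as trivial), and the ``if'' direction is exactly the paper's reduction to Lemma~\ref{l:G1} followed by the identification $H=H_+\cup(-H_+)=G_+\cup(-G_+)=G$.
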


\begin{proof} The ``only if'' part is trivial. To prove the ``if'' part, assume that $X$ is a $2$-subline with $\dist[X^2]=G_+$. By Lemma~\ref{l:G1}, $X$ is isometric to some subgroup $H\subseteq\IR$ with $\dist[X^2]=H_+$. Then $H_+=G_+$ and hence $H=H_+\cup\{-x:x\in H_+\}=G_+\cup\{-x:x\in G_+\}=G$. Therefore, the metric space $X$ is isometric to the group $G$.
\end{proof}

\section{A metric characterization of the space $\IZ_+$}

\begin{theorem}\label{t:4.1} Let $a$ be a positive real number. A metric space $X$ is isometric to the metric space $a\IZ_+\defeq\{an:n\in\IZ_+\}$ if and only if $X$ is a ray such that $\{a,2a\}\subseteq\dist[X^2]\subseteq a\IZ_+$ and $X$ is not an $\ell_1$-rectangle.
\end{theorem}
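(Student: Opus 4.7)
The ``only if'' direction is immediate verification: $a\IZ_+$ is a subset of $\IR$, hence a subline which is not an $\ell_1$-rectangle; the point $0$ gives $\Sf(0;r)=\{r\}$, witnessing the ray condition; $1$-sphericality is clear since $(k+m)a\in\Sf(ka;ma)$ for all $k,m\in\IZ_+$; and $\dist[(a\IZ_+)^2]=a\IZ_+\supseteq\{a,2a\}$. For the ``if'' direction, let $X$ be a ray with $\{a,2a\}\subseteq\dist[X^2]\subseteq a\IZ_+$ that is not an $\ell_1$-rectangle, and let $o\in X$ witness the ray condition. Since $X$ is a subline, Theorem~\ref{t:main1} yields an isometric embedding $f\colon X\to\IR$, which we translate so that $f(o)=0$. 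The constraint $\dist[X^2]\subseteq a\IZ_+$ then forces $f(X)\subseteq a\IZ$. The plan is to show, in two steps, that after possibly replacing $f$ by $-f$ we have $f(X)=a\IZ_+$.

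\emph{Step 1: one-sidedness.} I first claim $f(X)\subseteq a\IZ_+$ or $f(X)\subseteq -a\IZ_+$. Assume neither, and let $p$ be the smallest positive element of $f(X)$ and $q$ the negative element of $f(X)$ with smallest $|q|$; since $\Sf(o;p)$ is a singleton, $|q|\ne p$, so by symmetry (swap $f$ with $-f$) I may assume $p<|q|$. Applying $1$-sphericality to $\Sf(q;p)\subseteq\{q-p,q+p\}\cap f(X)$, the minimality of $|q|$ excludes $q+p$ (it is negative with $|q+p|=|q|-p<|q|$), so $q-p\in f(X)$, whence $-(|q|+p)\in f(X)$. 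Now $|q|+p=|q-p|\in\dist[X^2]$, and the singleton-sphere $\Sf(o;|q|+p)$ already containing $-(|q|+p)$ then excludes $|q|+p$ from $f(X)$. On the other hand, $1$-sphericality applied to $\Sf(p;|q|)\subseteq\{p-|q|,p+|q|\}\cap f(X)$ excludes $p-|q|$ (negative of absolute value less than $|q|$), hence $p+|q|\in f(X)$---the desired contradiction. Replace $f$ by $-f$ if necessary so that $f(X)\subseteq a\IZ_+$.

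\emph{Step 2: exhausting $a\IZ_+$.} Since $\{a,2a\}\subseteq\dist[X^2]$, the singleton spheres $\Sf(o;a)$ and $\Sf(o;2a)$ must equal $\{a\}$ and $\{2a\}$ respectively, yielding $\{0,a,2a\}\subseteq f(X)$. For the inductive step, assume $na\in f(X)$ for some $n\geq 2$. Then $na\in\dist[f(X)^2]$, and $1$-sphericality applied to $\Sf(a;na)\subseteq\{-(n-1)a,(n+1)a\}\cap f(X)$ excludes $-(n-1)a<0$, forcing $(n+1)a\in f(X)$. Hence $a\IZ_+\subseteq f(X)\subseteq a\IZ_+$, so $X$ is isometric to $a\IZ_+$. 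I expect the one-sidedness argument in Step 1 to be the main obstacle, since it is the only part of the proof that must combine the singleton-sphere property at $o$ with $1$-sphericality at two other points via the case analysis above; in contrast, Step 2 is essentially a two-line induction once the image lies in $a\IZ_+$.
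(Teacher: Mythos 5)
Your proof is correct, and after the common opening (Theorem~\ref{t:main1} gives an embedding $f\colon X\to\IR$ with $f(o)=0$, and $\dist[X^2]\subseteq a\IZ_+$ forces $f(X)\subseteq a\IZ$) it takes a genuinely different route from the paper's. The paper never isolates your Step 1: it records only the weaker ``antisymmetry'' property (for $r\in\dist[X^2]\setminus\{0\}$, exactly one of $\pm r$ lies in the image), normalizes $a\in f(X)$, proves $2a\in f(X)$ by a separate contradiction passing through $\pm 3a$, and then runs an induction whose sphere centers are chosen near the midpoint of the initial segment already constructed ($c=\tfrac12 ia$ for even $i$, $c=\tfrac12(i-1)a$ for odd $i$), precisely so that the rejected point of each two-point sphere is $-a$ or $-2a$, already excluded by antisymmetry; one-sidedness of the image only falls out at the very end. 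You invert the order: the minimal-element argument with $p$ and $q$ settles one-sidedness up front --- notably without using $\{a,2a\}\subseteq\dist[X^2]$ at all, so it is really a standalone lemma saying that every ray contained in the discrete group $a\IZ$ lies on one side of its witness point --- after which the induction becomes mechanical with the fixed center $a$ and radius $na$, since all negative candidates are excluded wholesale. What each buys: your decomposition front-loads the difficulty into one clean structural dichotomy (whose proof uses the well-ordering of $a\IN$ to produce $p$ and $|q|$, harmless here since the image is discrete) and trivializes the induction, only needing $\{a,2a\}\subseteq\dist[X^2]$ to seed the base case; the paper avoids any extremal/minimal-element argument but pays with the fiddlier $2a\in X$ claim and an induction that must track the whole initial segment $\{0,a,\dots,ia\}$ rather than a single point.
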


\begin{proof} The ``only if'' part is trivial. To prove the ``if'' part, assume that $X$ is
a ray such that $\{a,2a\}\subseteq\dist[X^2]\subseteq\IZ$ and $X$ is not an $\ell_1$-rectangle. By Theorem~\ref{t:main1}, $X$ is isometric to a subspace of the real line. So, we lose no generality assuming that  $X\subseteq\IR$. Since $X$ is a ray, there exists a point $o\in X$ such that for every $r\in\dist[X^2]$, the sphere $\Sf(o,r)$ is a singleton. We lose no generality assuming that $0=o\in X\subseteq\IR$. Then $X$ is antisymmetric in the sense that for every $r\in\dist[X^2]\setminus\{0\}$ we have $r\in X$ if and only if $-r\notin X$.

It follows from $0\in X$ and $\dist[X^2]\subseteq a\IZ_+$ that $X\subseteq a\IZ$. Since the sphere $\Sf(o;a)$ is a singleton, $a\in X$ or $-a\in X$. We lose no generality assuming that $a\in X$ and hence $-a\notin X$.

We claim that $2a\in X$. To derive a contradiction, assume that $2a\notin X$ and hence $-2a\in X$, by the inclusion $2a\in\dist[X^2]$ and the antisymmetry  of $X$. Taking into account that $X$ is a $1$-subline with $-a\notin X$, $-2a\in X$ and $a\in \dist[X^2]$, we conclude that $-3a\in X$ and $3a\notin X$, by the antisymmetry of $X$. Taking into account that $X$ is a $1$-subline with $a\in X$, $-a\notin X$,  and $2a\in\dist[X^2]$, we conclude that $a+2a=3a\in X$, which is a desired contradiction showing that $2a\in X$ and $-2a\notin X$, by the antisymmetry of $X$. The following lemma and the antisymmetry of $X$ complete the proof of the theorem.
\end{proof}

\begin{lemma} Let $a$ be a positive real number. If $X\subset a\IZ$ is a $1$-subline with $0,a,2a\in X$ and $-2a,-a\notin X$, then $a\IZ_+\subseteq X$.
\end{lemma}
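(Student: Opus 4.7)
The plan is to prove by strong induction on $n \geq 0$ that $na \in X$. The base cases $n \in \{0,1,2\}$ are immediate from the hypothesis. For the inductive step, fix $n \geq 3$ and assume $ka \in X$ for every $0 \leq k \leq n-1$.

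The key idea is to pick integers $p,q$ with $0 < p < q \leq n-1$, $p+q = n$, and $q-p \in \{1,2\}$, and then apply the $1$-subline property to the center $pa$ and radius $qa$. By the inductive hypothesis $pa,\,qa \in X$, so $qa = |qa - 0| \in \dist[X^2]$; since $X \subseteq a\IZ \subseteq \IR$, the sphere $\Sf(pa;qa)$ is exactly $X \cap \{(p-q)a,\,(p+q)a\} = X \cap \{-(q-p)a,\,na\}$. The $1$-subline property says this intersection is nonempty, and since $(q-p)a \in \{a,2a\}$ we have $-(q-p)a \in \{-a,-2a\}$, which is disjoint from $X$ by hypothesis. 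The only remaining possibility is $na \in X$.

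The concrete choice is $p = \lfloor (n-1)/2 \rfloor$ and $q = \lceil (n+1)/2 \rceil$: for $n$ odd this yields $p = (n-1)/2$, $q = (n+1)/2$, $q-p = 1$; for $n$ even it yields $p = (n-2)/2$, $q = (n+2)/2$, $q-p = 2$. A direct check shows $q \leq n-1$ whenever $n \geq 3$, which justifies the use of the inductive hypothesis for both $pa$ and $qa$.

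The main (and essentially only) obstacle is choosing a sphere whose "other" alternative is explicitly forbidden. The two exclusions $-a,-2a \notin X$ dovetail perfectly with the two parities of $n$: odd $n$ is captured by the $-a$ obstruction and even $n$ by the $-2a$ obstruction, and the proof splits cleanly along these lines. I do not foresee additional difficulties.
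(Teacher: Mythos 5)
Your proposal is correct and follows essentially the same route as the paper: the paper also inducts on multiples of $a$, and in its inductive step (from $ia$ to $(i+1)a$) it uses the center $\frac{i}{2}a$ with radius $(\frac{i}{2}+1)a$ when $i+1$ is odd, and center $\frac{i-1}{2}a$ with radius $(\frac{i-1}{2}+2)a$ when $i+1$ is even, which after reindexing $n=i+1$ are exactly your choices of $p$ and $q$, with the same parity split matching the exclusions $-a,-2a\notin X$. The only cosmetic difference is your framing as strong induction versus the paper's induction on the initial segment $\{0,a,\dots,ia\}\subseteq X$.
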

\begin{proof}
%{\red Now we can provide a shorter proof. Indeed, by Theorem~\ref{t:semiaffine} 
%$X=(H+c)\cup (H+b)$ for some subgroup $H$ of $G$ and some elements $c,b\in X$ or 
%$X=(H\setminus C)+g$ for some $g\in G$, some subgroup $H\subseteq G$ and some midconvex set $C$ in $H$.
%In the first case one of the cosets $H+$ contains $0$, so the  
%}
By induction we shall prove that for every $i\in\IN$ the set $\{0,a,\dots,ia\}$ is a subset of $X$. This is so for $i\in\{1,2\}$. Assume that for some positive number $i\ge 2$ we know that  $\{0,a,\dots,ia\}\subseteq X$.

 If $i$ is even, consider the number $c=\frac12ia$ and observe that $c+a=\frac12ai+a\le \frac12ia+\frac12ia=ia$ and hence $c,c+a\in \dist[X^2]$. Since $X$ is a $1$-subline and $c-(c+a)=-a\notin X$, the number $c+(c+a)=ia+a$ belongs to $X$, witnessing that $\{0,a,\dots,ia,(i+1)a\}\subseteq X$.

 If $i$ is odd, consider the number $c=\frac12(i-1)a$ and observe that $c+2a=\frac12ia+\frac32a\le \frac12ia+\frac12ia=ia$ and hence $c,c+2a\in \dist[X^2]$.  Since $X$ is a $1$-subline and $c-(c+2a)=-2a\notin X$, the number $c+(c+2a)=(i+1)a$ belongs to $X$, witnessing that $\{0,a,\dots,ia,(i+1)a\}\subseteq X$.

This completes the inductive step. By the Principle of Mathematical Induction, $\{0,a,\dots,ia\}\subseteq X$ and hence $a\IZ_+\subseteq X$.
\end{proof}

\begin{remark} The ``if'' part of Theorem~\ref{t:4.1} can be also derived from the properties of semiaffine and midconvex sets established in Theorems~\ref{t:1sub} and \ref{t:3}. Indeed, assume that a metric space $X$ is a ray such that $\{a,2a\}\subseteq\dist[X^2]\subseteq a\IZ_+$ and $X$ is not an $\ell_1$-rectangle. Applying Theorem~\ref{t:1sub}, we can prove that $X$ is isometric to $H\setminus C$  for some subgroup $H$ of $\IR$, some midconvex set $C$ in the group $H$. 
Composing the isometry with a shift on the group $H$ we can assume that $0\in H\setminus C$ and
for each $r\in\dist[X^2]$, the sphere $\Sf(0,r)$ is a singleton. 
Since $\dist[X^2]\subseteq a\IZ_+$, replacing $H$ by $H\cap a\IZ$ and $C$ by $C\cap a\IZ$, if needed, we can suppose that $H=a\IZ$. By Theorem~\ref{t:3}, $C=C'\cap(P+x)$ for some order-convex set $C'\subseteq H=a\IZ$, some $x\in C$ and some subgroup $P$ of $H$. %such that the quotient group $H/P$ contains no elements of even order.
Since for every $r\in\dist[X^2]$ the sphere $\Sf(0,r)$ is a singleton, the coset $P+x$ equals $H$, and $C'\cap (P+x)$ equals $H\cap a\IN$ or $H\cap (-a\IN)$.
\end{remark}

\section{Proof of Theorem~\ref{t:main3}} Let $G$ be a subgroup of the additive group $\IQ$ of rational numbers. Given a metric space $X$, we should prove that $X$ is isometric to the monoid $G_+\defeq\{x\in G:x\ge 0\}$ if and only if $X$ is a ray with $\dist[X^2]=G_+$. The ``only if'' part of this characterization is trivial. To prove the ``if'' part, assume that $X$ is a ray with $\dist[X^2]=G_+$. If the group $G$ is trivial, then $\dist[X^2]=G_+=\{0\}$ and hence $X$ is a singleton, isometric to the singleton $G_+=\{0\}$. So, assume that the group $G$ is not trivial. Then $G$ is infinite and so is the set $G_+$. Since $\dist[X^2]=G_+$, the metric space $X$ is infinite and hence is not an $\ell_1$-rectangle.% By Theorem~\ref{t:main1}, $X$ is isometric to a subspace of the real line. We lose no generality assuming that $X\subseteq\IR$Being a ray, the space $X$ contains a point $o\in X$ sui

If the group $G$ is finitely generated, then $G$ is cyclic (being a subgroup of a cyclic subgroup of $\IQ$) and hence $G=a\IZ$ for some positive number $a\in G\subseteq\IQ$. By Theorem~\ref{t:4.1}, $X$ is isometric to $a\IZ_+=G_+$.

If the group $G$ is infinitely generated, then $G=\bigcup_{n\in\w}G_n$ for a strictly increasing sequence $(G_n)_{n\in\w}$ of non-trivial finitely-generated subgroups $G_n$ of $G\subseteq\IQ$.
Every group $G_n$ is cyclic, being a subgroup of a suitable cyclic subgroup of the group $\IQ$. Then $G_n=a_n\IZ$ for some positive rational number $a_n$.

Being a ray, the space $X$ contains a point $o\in X$ such that for every $r\in \dist[X^2]$, the sphere $\Sf(o;r)$ is a singleton. By Theorem~\ref{t:main1}, the infinite subline $X$ is isometric to a subspace of the real line. We lose no generality assuming that $0=o\in X\subseteq\IR$. It follows from $\dist[X^2]=G_+\subseteq\IQ$ that $X\subseteq \IQ$. It is easy to see that for every $n\in\IN$ the intersection $X_n=X\cap G_n$ is a ray such that $\dist[X_n^2]=(G_n)_+=a_n\IZ_+$. By Theorem~\ref{t:4.1}, the space $X_n$ is isometric to the space $a_n\IZ_+$ and hence $X_n=a_n\IZ_+$ or $X_n=-a_n\IZ_+$. We lose no generality assuming that $X_0=a_0\IZ_+$. Then $X_n=a_n\IZ_+$ for all $n\in\w$ and hence $X=\bigcup_{n\in\w}X_n=\bigcup_{n\in\w}a_n\IZ_+=\bigcup_{n\in\w}(G+n)_+=G_+$.

\section{Proof of Theorem~\ref{t:main4}}

Given a metric space $X$, we should prove that $X$ is isometric to the half-line $\IR_+$ if and only if $X$ is a complete ray such that $\IQ_+\subseteq\dist[X^2]$. The ``only if'' part of this characterization is trivial. To prove the ``if'' part, assume that $X$ is a complete ray such that $\IQ_+\subseteq\dist[X^2]$. Then $X$ is infinite and hence is isometric to a subspace of the real line. Being a ray, the space $X$ contains a point $o\in X$ such that for every $r\in\dist[X^2]$ the sphere $\Sf(o;r)$ is a singleton. We lose no generality assuming that $0=o\in X\subseteq \IR$.

 It is easy to see that the subspace $Y=\{x\in X:xo\in \IQ_+\}$ of $X$ is a ray with $\dist[Y^2]=\IQ_+$. By Theorem~\ref{t:main3}, $Y$ is isometric to $\IQ_+$. Then $Y=\IQ_+$ or $Y=-\IQ_+$. Replacing the set $X\subseteq\IR$ by $-X$, if necessary, we can assume that $Y=\IQ_+$. Being complete, the metric space $X$ contains the completion of its subspace $Y$, which coincides with $\IR_+$. Then $\IR_+\subseteq X$ and $\dist[X^2]=\IR_+$. Assuming that $X\ne\IR_+$, we can find a point $x\in X\setminus\IR_+$ and conclude that the sphere $\Sf(o;|x|)$ contains two distinct points $x$ and $-x$, which contradicts the choice of the point $o$.

\section{Constructing  Example~\ref{ex:1}}\label{s:field}

In this section we elaborate the construction of the ray $X$ from Example~\ref{ex:1}.

This construction exploits the following algebraic characterization of rays in the real line.

\begin{lemma}\label{l:ray} A subset $X$ of the real line is a ray if and only if it has two properties:
\begin{enumerate}
\item $\forall x\in X\;\forall r\in X-X\;\{x-r,x+r\}\cap X\ne\emptyset$;
\item $\exists o\in X\;\forall r\in X-X\;|\{o-r,o+r\}\cap X|\le 1$.
\end{enumerate}
\end{lemma}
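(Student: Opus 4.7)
The plan is to build a dictionary between the metric-geometric language appearing in the definition of a ray and the elementary set-theoretic language of (1) and (2), then verify that each clause in the definition translates into precisely one of the two conditions.

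First I would record the basic translations. For $X\subseteq\IR$ with $\dist(x,y)=|x-y|$, one has $\dist[X^2]=(X-X)\cap[0,\infty)$ and, for every $c\in X$ and $r\ge 0$, the identity $\Sf(c;r)=X\cap\{c-r,c+r\}$. Moreover, any three real numbers $u\le v\le w$ satisfy $w-u=(w-v)+(v-u)$, so the Triangle Equality is automatic; hence every subset of $\IR$ is a subline, and for such an $X$ being a $1$-subline is equivalent to being $1$-spherical.

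Second, I would translate (1). Since $X-X$ is symmetric about $0$ and contains $0$, the unordered pair $\{x-r,x+r\}$ coincides with $\{x-|r|,x+|r|\}$, and the case $r=0$ is trivial because $x\in X$. So (1) is equivalent to the assertion that for every $x\in X$ and every $s\in\dist[X^2]\setminus\{0\}$ the sphere $\Sf(x;s)=X\cap\{x-s,x+s\}$ is nonempty; that is, $X$ is $1$-spherical. The same rewriting shows that (2) says exactly that there exists $o\in X$ such that $|\Sf(o;s)|\le 1$ for every $s\in\dist[X^2]$ (the case $s=0$ automatically giving $\{o\}$).

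Combining the two pieces in both directions completes the argument. If $X$ is a ray, then it is a $1$-subline (yielding (1)) and the distinguished point $o$ gives singleton spheres (yielding (2)). Conversely, if (1) and (2) hold, then the automatic subline property together with (1) upgrades $X$ to a $1$-subline, and at the point $o$ from (2) the bound $|\Sf(o;s)|\le 1$ is promoted to $|\Sf(o;s)|=1$ by (1), which is exactly the ray condition. The only care required is bookkeeping with the sign of $r\in X-X$ versus the nonnegative $|r|\in\dist[X^2]$ and the trivial case $r=0$; I do not foresee any substantive obstacle beyond this unpacking.
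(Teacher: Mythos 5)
Your proof is correct and follows essentially the same approach as the paper: the paper's (very terse) proof consists exactly of the two translation identities $\dist[X^2]=(X-X)_+$ and $\Sf(x;r)=\{x-r,x+r\}\cap X$, leaving the remaining bookkeeping implicit. Your write-up simply fills in those details (the automatic Triangle Equality for subsets of $\IR$, the sign of $r$, and the upgrade from $|\Sf(o;s)|\le 1$ to a singleton via condition (1)), all of which are correct.
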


\begin{proof} To see that this characterization indeed holds, observe that
$\dist[X^2]=(X-X)_+$ and $\Sf(x;r)=\{x-r,x+r\}\cap X$ for any $x\in X$ and $r\in\IR_+$.
\end{proof}

A function $f:\IR\to\IR$ is called {\em additive} if $f(x+y)=f(x)+f(y)$ for all $x,y\in\IR$.

\begin{lemma}\label{l:im-ray} For every ray $X\subseteq \IR$ and every injective additive function $f:\IR\to\IR$, the metric subspace $f[X]$ of $\IR$ is a ray.
\end{lemma}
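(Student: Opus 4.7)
\smallskip

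The plan is to verify directly the two algebraic conditions from Lemma~\ref{l:ray} for the image $f[X]$, using additivity and injectivity of $f$ to transfer them from $X$. First I would record the easy consequences of additivity, namely $f(0)=0$ and $f(-x)=-f(x)$, and the identity $f[X]-f[X]=f[X-X]$ (the inclusion $\supseteq$ follows from $f(x-y)=f(x)-f(y)$, the inclusion $\subseteq$ is immediate).

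Next I would verify condition (1) of Lemma~\ref{l:ray} for $f[X]$. Fix $y\in f[X]$ and $s\in f[X]-f[X]$. Write $y=f(x)$ with $x\in X$, and $s=f(r)$ with $r\in X-X$. Then
$$
\{y-s,y+s\}=\{f(x-r),f(x+r)\}=f[\{x-r,x+r\}].
$$
By condition~(1) of Lemma~\ref{l:ray} applied to $X$, the set $\{x-r,x+r\}\cap X$ is nonempty, so its image under $f$ is a nonempty subset of $\{y-s,y+s\}\cap f[X]$.

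Then I would verify condition~(2) with the distinguished point $o'\defeq f(o)\in f[X]$, where $o$ is a point witnessing condition~(2) for $X$. Given $s\in f[X]-f[X]$, write $s=f(r)$ with $r\in X-X$. As above, $\{o'-s,o'+s\}\cap f[X]=f[\{o-r,o+r\}\cap X]$. Since $f$ is injective, the cardinality of this image equals the cardinality of $\{o-r,o+r\}\cap X$, which is at most $1$ by the choice of $o$. By Lemma~\ref{l:ray}, $f[X]$ is a ray.

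The only potential subtlety is the identification $f[X]-f[X]=f[X-X]$ and the handedness of the ``$\pm$'' under $f$, but both are immediate from additivity; I do not foresee a real obstacle.
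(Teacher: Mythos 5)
Your proposal is correct and follows essentially the same route as the paper: both verify the two algebraic conditions of Lemma~\ref{l:ray} for $f[X]$, transferring them from $X$ via the identity $f[X]-f[X]=f[X-X]$ together with the additivity and injectivity of $f$. The only cosmetic difference is that the paper proves condition (2) by contradiction while you argue directly via cardinality preservation; just note explicitly that the inclusion $\{f(o)-s,f(o)+s\}\cap f[X]\subseteq f\bigl[\{o-r,o+r\}\cap X\bigr]$ (not only the cardinality step) also relies on injectivity of $f$.
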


\begin{proof} To show that $f[X]$ is a ray, it suffices to check that $f[X]$ satisfies the algebraic conditions of Lemma~\ref{l:ray}.
\smallskip

1. Fix any numbers $y\in f[X]$ and $r\in f[X]-f[X]$. The additivity of $f$ ensures that $f[X]-f[X]=f[X-X]$ and hence $r=f(s)$ for some $s\in X- X$. Since $X$ is a ray, for the element $x=f^{-1}(y)\in X$, the set $X\cap\{x-s,x+s\}$ contains some point $z=x\pm s$. The additivity of $f$ ensures that $f(z)=f(x\pm s)=f(x)\pm f(s)=y\pm r\in\{y-r,y+r\}\cap f[X]$ and hence the set $\{y-r,y+r\}\cap f[X]$ is not empty.
\smallskip

2. Since $X$ is a ray, there exists a point $o$ such that for every $s\in \dist[X^2]=(X-X)_+$ the sphere $\{x\in X:|o-x|=s\}$ is a singleton. We claim that the point $f(o)$ has the property required in condition (2) of Lemma~\ref{l:ray}. Assuming that this condition does not hold, we can find a real number $r\in f[X]-f[X]$ such that $\{f(o)-r,f(o)+r\}\cap f[X]$ is a doubleton. Then $f(o)-r=f(x)$ and $f(o)+r=f(y)$ for some distinct real numbers $x,y\in X$. Since $r\in f[X]-f[X]=f[X-X]$, the real number $s=f^{-1}(r)$ belongs to the set $X-X$ and hence $|s|\in\dist[X^2]$.

Since the function $f$ is additive, $f(o-s)=f(o)-f(s)=f(o)-r=f(x)$ and hence $o-s=x$ by the injectivity of $f$. By analogy we can show that $o+s=y$. Then $\{x,y\}=\{o-s,o+s\}=\{o-|s|,o+|s|\}$ is a doubleton in $X$, which contradicts the choice of the point $o$. This contradiction completes the proof of the second condition of Lemma~\ref{l:ray} for the set $f[X]$.
\smallskip

By Lemma~\ref{l:ray}, the metric subspace $f[X]$ of the real line is a ray.
\end{proof}

Now we are able to justify Example~\ref{ex:1}. Let $G$ be any subgroup of $\IR$ containing two nonzero elements $a,b\in G$ such that $b\notin\IQ\cdot a\subseteq G$. We lose no generality assuming that $a,b>0$. Consider the real line as a vector space $\IQ$ over the field of rational numbers. The conditions $b\notin\IQ{\cdot}a$ and $a\ne 0$ imply that the vectors $a,b$ are linearly independent over the field $\IQ$. Using the Kuratowski--Zorn Lemma, choose a maximal subset $B\subseteq \IR$ such that $\{a,b\}\subseteq B$ and $B$ is linearly independent over $\IQ$. The maximality of $B$ guarantees that $B$ is an algebraic basis of the vector space $\IR$ over the field $\IQ$. Consider the function $f:B\to\IR$ such that $f(a)=-a$ and $f(x)=x$ for all $x\in B\setminus\{a\}$. By the linear independence of $B$, the function admits a unique extension to an additive function $\bar f:\IR\to\IR$. Since the set $f[B]=\{-a\}\cup(B\setminus\{a\})$ is linearly independent over the field $\IQ$, the additive function $\bar f:\IR\to\IR$ is injective.

\begin{claim}\label{cl:fG=G} $f[G]=G$.
\end{claim}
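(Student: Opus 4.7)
The plan is to compute $\bar f(g)$ explicitly for an arbitrary $g\in G$ and observe that $\bar f(g)-g$ always lies in the subgroup $\IQ\cdot a$, which by hypothesis is contained in $G$. Since $B$ is a $\IQ$-basis of $\IR$, every $g\in G\subseteq\IR$ has a unique expansion $g=q\,a+\sum_{i=1}^{n}q_i b_i$ with $q,q_1,\dots,q_n\in\IQ$ and $b_1,\dots,b_n\in B\setminus\{a\}$. Applying $\bar f$ to this expansion and using the definition $\bar f(a)=-a$, $\bar f(b_i)=b_i$, together with $\IQ$-linearity of $\bar f$, gives
\[
\bar f(g)=-q\,a+\sum_{i=1}^n q_i b_i= g-2q\,a.
\]
Because $q\in\IQ$ we have $2q\,a\in\IQ\cdot a\subseteq G$, hence $\bar f(g)=g-2q\,a\in G$. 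This proves $\bar f[G]\subseteq G$.

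For the reverse inclusion I would use that $\bar f$ is an involution. Indeed, $\bar f(\bar f(a))=\bar f(-a)=-\bar f(a)=a$, and $\bar f(\bar f(x))=x$ for every $x\in B\setminus\{a\}$; since both $\bar f\circ\bar f$ and the identity are additive and agree on the $\IQ$-basis $B$, they coincide on all of $\IR$. Therefore $G=\bar f[\bar f[G]]\subseteq\bar f[G]$, which combined with the first inclusion yields $\bar f[G]=G$.

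No serious obstacle is anticipated: the whole argument is linear algebra over $\IQ$ applied to the explicit formula for $\bar f$. The only hypothesis that genuinely matters is $\IQ\cdot a\subseteq G$, which is exactly what is needed to absorb the correction term $-2q\,a$ back into $G$; without it, the additive image $\bar f[G]$ could leave $G$.
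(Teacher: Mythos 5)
Your proof is correct and follows essentially the same route as the paper's: you expand $g$ in the $\IQ$-basis $B$, compute $\bar f(g)=g-2qa$ and absorb the correction term using $\IQ{\cdot}a\subseteq G$, then obtain the reverse inclusion from the involution $\bar f\circ\bar f=\mathrm{id}_{\IR}$. Your unified expansion (allowing $q=0$) merely merges the paper's two cases $a\in F$ and $a\notin F$, and your explicit check that $\bar f(\bar f(a))=a$ is, if anything, slightly more careful than the paper's wording.
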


\begin{proof} Since $B$ is a basis of the $\IQ$-vector space $\IR$, for every element $x\in G$, there exist a finite set $F\subseteq B$ and a function $\lambda:F\to\IQ\setminus\{0\}$ such that $x=\sum_{e\in F}\lambda(e)\cdot e$. If $a\notin F$, then $\bar f(x)=\sum_{e\in F}\lambda(e)f(e)=\sum_{e\in F}\lambda(e)e=x\in G$.
If $a\in F$, then
\begin{multline*}
\bar f(x)=\lambda(a)f(a)+\sum_{e\in F\setminus\{a\}}\lambda(e)f(e)=-\lambda(a)a+\sum_{e\in F\setminus\{a\}}\lambda(e)e\\
=-2\lambda(a)a+\sum_{e\in F}\lambda(e)e=-2\lambda(a)a+x\in G
\end{multline*} because $\IQ{\cdot}a\subseteq G$.

Therefore, $\bar f[G]\subseteq G$. Since $f\circ f$ is the identity map of the algebraic  basis $B$ of the $\IQ$-vector space $\IR$, the composition $\bar f\circ\bar f$ is the idenity function of $\IR$. Then $\bar f[G]\subseteq G$ implies $G=\bar f[\bar f[G]]\subseteq \bar f[G]$ and hence $\bar f[G]=G$.
\end{proof}

 By Lemma~\ref{l:im-ray}, the metric subspace $X=f[G_+]$ of the real line is a ray.
 Since $f$ is additive, $X-X=f[G_+]-f[G_+]=f[G_+-G_+]=f[G]=G$, see Claim~\ref{cl:fG=G}. Then $\dist[X^2]=(X-X)_+=G_+$.

Taking into account that the numbers $a,b$ are positive and $\bar f(a)=f(a)=-a$, $\bar f(b)=f(b)=b$, we conclude that the set $\IZ_+b-\IQ_+a\subseteq f[G_+]+f[G_+]=f[G_++G_+]=f[G_+]=X$ is dense in $\IR$.

Since the set $X$ is dense in $\IR$, the completion of the metric space $X$ coincides with $\IR$. On the other hand, the completion of the space $G_+\supseteq \IQ_+{\cdot}a$ coincides with the half-line $\IR_+$, which is not isometric to $\IR$. This implies that the ray $X$ is not isometric to the ray $G_+$.

\section{Acknowledgement}

The second author express his sincere thanks to Mamuka Jibladze for pointing out the references \cite{Bowers} and \cite{Menger} to Menger's Theorems~\ref{t:main1} and \ref{t:Rn}, and also to Pavlo Dzikovskyi for fruitful discussions on the Triangle Equality in metric spaces.

\end{document}